\theoremstyle{plain}
\newtheorem{theorem}{Theorem}[section]
\newtheorem{corollary}[theorem]{Corollary}
\newtheorem{lemma}[theorem]{Lemma}
\newtheorem{proposition}[theorem]{Proposition}
\theoremstyle{definition}
\theoremstyle{definition}
\numberwithin{equation}{section}
\newcommand{\ex}{\mathbb{E}}
\newcommand{\re}{\textup{Re}}
\begin{document}

\title{Low pseudomoments of Euler products} 

\author{Maxim Gerspach}
\address{Department of Mathematics, KTH Royal Institute of Technology, Lindstedtsvägen 25, 114 28 Stockholm, Sweden}
\email{gerspach@kth.se}

\author{Youness Lamzouri}
\address{Institut \'Elie Cartan de Lorraine, Universit\'e de Lorraine, BP 70239, 54506 Vandoeuvre-l\`es-Nancy Cedex, France}

\email{youness.lamzouri@univ-lorraine.fr}

\thanks{The first author is partially supported by Swedish Research Council Grant No. 2016-05198.}

\date{\today}

\begin{abstract} In this paper, we determine the order of magnitude of the $2q$-th pseudomoment of powers of the Riemann zeta function $\zeta(s)^{\alpha}$ for $0<q\le 1/2$ and $0< \alpha<1$, completing the results of Bondarenko, Heap and Seip, and of Gerspach. Our results also apply to more general Euler products satisfying certain conditions. 

\end{abstract}


\maketitle

\section{Introduction}

A fundamental problem in analytic number theory is to obtain asymptotic formulas for the moments of the Riemann zeta function. Motivated by this longstanding problem, Conrey and Gamburd \cite{CG} defined and studied its \emph{pseudomoments}, which correspond to the moments of its partial sums on the critical line. More precisely, the $2q$-th pseudomoment of the zeta function is defined by
\begin{equation}\label{PseudoZeta}
\Psi_{2q}(x):=\lim_{T\to \infty} \frac{1}{T} \int_0^T \left|\sum_{n \le x} \frac{1}{n^{1/2+i t}}\right|^{2 q} \, dt,\end{equation}
where $q>0$ is a real number. When $q$ is a positive integer, Conrey and Gamburd \cite{CG} proved the following asymptotic formula
$$ 
\Psi_{2q}(x) \sim C_q (\log x)^{q^2},
$$
where $C_q= a_q \gamma_q$, with $a_q$ being the arithmetic factor in the conjecture of Keating and Snaith \cite{KS} for the $2q$-th moment of the Riemann zeta function, and $\gamma_q$ is a geometric factor which corresponds to the volume of a certain convex polytope.

In \cite{BHS}, Bondarenko, Heap and Seip investigated the pseudomoments $\Psi_{2q}(x)$ for non-integral values of $q$. In particular, they proved that when $q>1/2$ is a real number one has 
\begin{equation}\label{OrderPZ}
\Psi_{2q}(x) \asymp (\log x)^{q^2}.
\end{equation}
They also obtained the corresponding  lower bound for all $q>0$ 
$$ \Psi_{2q}(x) \gg (\log x)^{q^2}. $$
The problem of obtaining the correct order of magnitude for the low pseudomoments $\Psi_{2q}(x)$ when $0<q\leq 1/2$ was solved in a recent work of Gerspach \cite{Ge}, who proved that \eqref{OrderPZ} is valid in the whole range $q>0$. 

In \cite{BBSSZ}, Bondarenko,  Brevig, Saksman, Seip, and Zhao  investigated the related problem of obtaining the order of magnitude for the pseudomoments of powers of the Riemann zeta function, and uncovered an interesting phenomenon in this case. For a complex number $\alpha$ we have
$$ \zeta(s)^{\alpha}= \sum_{n\ge 1} \frac{d_{\alpha}(n)}{n^s},$$
for $\re(s)>1$, where $d_{\alpha}$ is the generalized divisor function. The $2q$-th pseudomoment of $\zeta(s)^{\alpha}$ is defined similarly to \eqref{PseudoZeta} by 
$$ \Psi_{2q, d_{\alpha}}(x):=\lim_{T\to \infty} \frac{1}{T} \int_0^T \left|\sum_{n \le x} \frac{d_{\alpha}(n)}{n^{1/2+i t}}\right|^{2 q} \, dt.
$$
Similarly to the estimate \eqref{OrderPZ}, and in analogy with the conjectured asymptotics for the moments of the zeta function, one might guess that for $q>0$ and $\alpha >0$ we have
\begin{equation}\label{PseudoPowerZeta}
\Psi_{2q, d_{\alpha}}(x) \asymp (\log x)^{(q\alpha)^2}.
\end{equation}
Bondarenko, Heap and Seip \cite{BHS} proved that this is the case in the range $ q> 1/2$ for all $\alpha>0$. However, the authors of \cite{BBSSZ} showed that this order of magnitude cannot be correct when $\alpha>1$ for small values of $q$. Improving on their results by building on work of Harper \cite{Ha2}, Gerspach \cite{Ge} determined the correct order of magnitude of $\Psi_{2q, d_{\alpha}}(x)$ up to powers of $\log\log x$, when $0<q<1/2$ and $\alpha\geq 1$ are fixed. More precisely, he showed that 
\begin{equation}\label{Maxim}
\Psi_{2q, d_{\alpha}}(x) = \begin{cases} (\log x)^{2(\alpha-1) q} (\log\log x)^{O(1)} & \text{ if } 1\leq \alpha <2 \text{ and }  0 < q \le \frac{2(\alpha-1)}{\alpha^2},
\\  (\log x)^{(q\alpha)^2}& \text{ if } 1\leq \alpha <2 \text{ and } \frac{2(\alpha-1)}{\alpha^2} <q \le \frac12,\\
(\log x)^{q\alpha^2/2} (\log\log x)^{O(1)} & \text{ if } \alpha\ge 2 \text{ and }0 <q < \frac12.\\ \end{cases}
\end{equation}
We note that the case $q=1/2$ and $\alpha \geq 1$ readily follows from the work of 
Bondarenko, Heap and Seip \cite{BHS} who showed that 
$$
\Psi_{1, d_{\alpha}}(x) = (\log x)^{\alpha^2/4} (\log\log x)^{O(1)}.
$$

In this paper, we determine the order of magnitude of the pseudomoments $\Psi_{2q, d_{\alpha}}(x)$ in the remaining range $0<q\le 1/2$ and $0<\alpha<1$.  A corollary of our main result shows that the estimate \eqref{PseudoPowerZeta} is valid in this range. \begin{corollary}\label{zeta}
Let $0<q \le 1/2$  and $0< \alpha < 1$ be fixed. Then we have 
$$ \Psi_{2q, d_{\alpha}}(x) \asymp (\log x)^{(q \alpha)^2}.$$
\end{corollary}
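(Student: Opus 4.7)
The plan is to pass, via Bohr's theorem, to the Steinhaus random multiplicative function model: letting $(f(p))_p$ be i.i.d.\ uniform on the unit circle and extending $f$ completely multiplicatively, one has
$$
\Psi_{2q,d_\alpha}(x)=\ex\!\left[\Bigl|\sum_{n\le x}\frac{d_\alpha(n)f(n)}{\sqrt n}\Bigr|^{2q}\right]=:\ex|D(1/2)|^{2q}.
$$
The natural companion object is the truncated random Euler product
$$
F(s)=\prod_{p\le x}\Bigl(1-\frac{f(p)}{p^s}\Bigr)^{-\alpha}=\sum_{P^+(n)\le x}\frac{d_\alpha(n)f(n)}{n^s},
$$
where $P^+(n)$ denotes the largest prime factor of $n$. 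Writing $D(1/2)=F(1/2)-R$ with $R=\sum_{n>x,\,P^+(n)\le x}d_\alpha(n)f(n)/\sqrt n$, and using that $t\mapsto t^{2q}$ is subadditive for $0<2q\le 1$, the upper bound splits into controlling $\ex|F(1/2)|^{2q}$ and $\ex|R|^{2q}$ separately.

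The Euler product piece is immediate: by independence across primes,
$$
\ex|F(1/2)|^{2q}=\prod_{p\le x}\ex\!\left[\bigl|1-f(p)/\sqrt p\bigr|^{-2q\alpha}\right]=\prod_{p\le x}\!\left(1+\frac{(q\alpha)^2}{p}+O(p^{-2})\right)\asymp(\log x)^{(q\alpha)^2},
$$
by Taylor expanding each Euler factor (legitimate since $q\alpha<1/2$) and applying Mertens' theorem. For the tail $R$ one can start from the Perron representation
$$
D(1/2)=\frac{1}{2\pi i}\int_{(c)}F(1/2+w)\,\frac{x^w}{w}\,dw\qquad(c>0),
$$
shift the contour past the simple pole at $w=0$ (with residue $F(1/2)$) to $\re(w)=-\eta$ for a small $\eta\in(0,1/2)$, and dyadically decompose the resulting integral in $|\!\im(w)|$. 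Applying subadditivity of $|\cdot|^{2q}$ once more, invoking the rotational invariance $F(1/2+iv)\stackrel{d}{=}F(1/2)$ in law, and using the Euler product moment bound on each dyadic piece, one should obtain $\ex|R|^{2q}\ll(\log x)^{(q\alpha)^2}$.

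For the matching lower bound, the cleanest route is to extract it from the same decomposition: the reverse form of the subadditivity gives $|D|^{2q}\ge|F|^{2q}-|R|^{2q}$, so once the tail estimate is refined to $\ex|R|^{2q}=o(\ex|F|^{2q})$ (which should follow by running the Euler product computation on a slightly smaller truncation than $x$, producing a genuine gap between leading constants), we obtain $\ex|D(1/2)|^{2q}\gg(\log x)^{(q\alpha)^2}$. Alternatively, one can argue \`a la Bondarenko--Heap--Seip by restricting to integers with all prime factors at most $\sqrt x$, conditioning on the remaining randomness, and factoring the $2q$-th moment through $\ex|F_{\sqrt x}(1/2)|^{2q}$, which is again $\asymp(\log x)^{(q\alpha)^2}$ by the Euler product computation.

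The step I expect to be the main obstacle is the tail estimate $\ex|R|^{2q}\ll(\log x)^{(q\alpha)^2}$. A naive H\"older bound $\ex|R|^{2q}\le(\ex|R|^2)^q$ only yields $(\log x)^{q\alpha^2}$, which exceeds the target by a factor $(\log x)^{q\alpha^2(1-q)}$ and is therefore too weak whenever $q<1$. Extracting the correct exponent requires simultaneously exploiting (i) the Dickman-type rarity of $x$-smooth integers with $n>x$, which forces $R$ to be much smaller in $L^2$ than the full $x$-smooth sum, and (ii) the approximately Gaussian behaviour of $\log|F(1/2+iv)|$ in the subcritical regime $\alpha<1$, so that averaging along the shifted contour produces genuine cancellation rather than an additive pile-up. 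Both features break down precisely when $\alpha\ge 1$, in line with the anomalous orders of magnitude recorded in~\eqref{Maxim}, and their exploitation here is what makes the restriction $\alpha<1$ essential.
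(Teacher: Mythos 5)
Your decomposition $D(1/2)=F(1/2)-R$ and the Euler product computation $\ex|F(1/2)|^{2q}\asymp(\log x)^{(q\alpha)^2}$ are both fine, but the proposal has a genuine gap exactly where you flag it: the tail estimate $\ex|R|^{2q}\ll(\log x)^{(q\alpha)^2}$ is the entire content of the theorem, and the sketched Perron/contour-shift/dyadic remedy does not produce it. On each dyadic piece of the shifted line, the only tools you invoke are rotational invariance (which controls $\ex|F(1/2+iv)|^{2q}$ at a single point, not the $2q$-th moment of an integral over an interval) and then, implicitly, Cauchy--Schwarz or Jensen to pass from the $q$-th moment of the integral to second moments; but the second moment $\ex|F(1/2-\eta+iv)|^2\asymp(\log x)^{\alpha^2}$ feeds back precisely into the too-weak bound $(\log x)^{q\alpha^2}$ that you correctly identify as insufficient. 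There is also a technical obstruction you gloss over: Plancherel for Dirichlet series (Lemma \ref{PlancherelLemma}) requires $\sigma>0$, whereas your shifted line has $\sigma<0$; fixing this is what Proposition \ref{REP2} does via partial summation and Cauchy--Schwarz before Plancherel can be applied.

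The missing device is the mixed-moment H\"older trick at the heart of Propositions \ref{SmallT} and \ref{MediumT}: writing
$$ \ex\!\left[\left(\int_T^{2T}|H_{k,\sigma}(t)|^2\,dt\right)^{q}\right]\le\ex\!\left[|H_{k,\sigma}(0)|^{2q}\right]^{1-q}\,\ex\!\left[\int_T^{2T}|H_{k,\sigma}(t)|^2|H_{k,\sigma}(0)|^{-2(1-q)}\,dt\right]^{q}, $$
where the \emph{negative} power $|H_{k,\sigma}(0)|^{-2(1-q)}$ decorrelates the integrand, and Proposition \ref{MixedRandomMoments} then yields the exponent $q^2\alpha$ rather than $\alpha$. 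Without this, no rearrangement of subadditivity and $L^2$ bounds can close the gap between $(\log x)^{q\alpha^2}$ and $(\log x)^{(q\alpha)^2}$. The paper's own derivation of Corollary \ref{zeta} is, in contrast, immediate: $d_\alpha(p)=\alpha$ gives $\lambda_{d_\alpha}(x)=\alpha^2\log\log x+O(1)$, so Theorem \ref{Main} with $\alpha^2<1$ in place of $\alpha$ yields the upper bound $(\log x)^{q^2\alpha^2}=(\log x)^{(q\alpha)^2}$, and the lower bound is already \eqref{BHS} from Bondarenko--Heap--Seip; your proposed reverse-subadditivity route for the lower bound would require $\ex|R|^{2q}=o(\ex|F|^{2q})$, a strictly harder statement that is not needed.
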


More generally, we consider Euler products 

$$ 
\mathcal{G}(s)= \sum_{n\geq 1} \frac{g(n)}{n^s} = \prod_{p} \left(1+\frac{g(p)}{p^s}+ \frac{g(p^2)}{p^{2s}}+\cdots\right),
$$ 
for $\re(s)>1$, where $g :\mathbb{N} \to \mathbb{C}$ is a multiplicative function satisfying
\begin{equation}\label{gBoundCond}
|g(n)| \le \min(A n^{\theta}, B^{\Omega(n)})
\end{equation}
for some constants $A, B > 0$ and fixed $0 < \theta < 
\frac{1}{48}$.  
We also assume that $g$ additionally satisfies
\begin{equation} \label{SelbergOrtho}
\lambda_g(x) := \sum_{p\leq x} \frac{|g(p)|^2}{p}= \alpha \log\log x+ O(1),
\end{equation}
for some $\alpha > 0$. 
This estimate would follow from the Selberg orthonormality conjectures if $\mathcal{G}$ is  an $L$-function from the Selberg class. Note that \eqref{gBoundCond} already implies that $\lambda_g(x) \le B^2 \log \log x + O(1)$.
	
For $q>0$ we define the $2q$-th pseudomoment of $\mathcal{G}$ by
\[ \Psi_{2q,g}(x) := \lim_{T \to \infty} \frac{1}{T} \int_T^{2 T} \left| \sum_{n \le x} \frac{g(n)}{n^{1/2+i t}}\right|^{2 q} \, dt. \]

Bondarenko, Heap and Seip \cite{BHS} showed that if \eqref{gBoundCond} and \eqref{SelbergOrtho} hold then for $q>0$ fixed we have
 $$ \Psi_{2 q, g}(x) \asymp  (\log x)^{q^2\alpha},$$
 if $q>1/2$, 
 $$ (\log x)^{\alpha/4} \ll \Psi_{2 q, g}(x) \ll (\log x)^{\alpha/4}\log\log x,
 $$ 
if $q=1/2$, 
and 
	\begin{equation}\label{BHS}
	(\log x)^{q^2\alpha} \ll \Psi_{2 q, g}(x) \ll (\log x)^{q\alpha}.
	\end{equation}
if $0<q<1/2$. 

Our main result shows that when $0<q\le 1/2$ is fixed, the correct order of magnitude of the pseudomoments $\Psi_{2 q, g}(x)$ corresponds to the lower bound of \eqref{BHS} when  $0<\alpha < 1$.
	
	\begin{theorem}\label{Main}
	Suppose that $g :\mathbb{N} \to \mathbb{C}$ is a multiplicative function satisfying \eqref{gBoundCond}, as well as \eqref{SelbergOrtho} for some (fixed) $0<\alpha < 1$. Then uniformly over $0 < q 
	\le 1/2$, we have 
	\begin{align*} \Psi_{2 q, g}(x) &\ll (\log x)^{q^2 \alpha}  \min \left\{\frac{1}{q},\log \log x \right\} \min \left\{ \frac{1}{q^3}, (\log \log \log x)^2 \right\}. \end{align*}
	In particular, if $0<q\le 1/2$ is fixed then 
	$$ \Psi_{2 q, g}(x) \asymp  (\log x)^{q^2\alpha}.$$
	\end{theorem}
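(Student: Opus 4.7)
The plan is to pass to the Steinhaus random model, replace the Dirichlet polynomial by a random Euler product, and compute the resulting moment prime by prime, closing the uniformity in $q$ with a conditioning argument. By Carlson's theorem (equivalently, Birkhoff's ergodic theorem on the infinite torus), one has
\[ \Psi_{2q,g}(x) = \mathbb{E}\left|\sum_{n\le x}\frac{g(n)f(n)}{\sqrt{n}}\right|^{2q}, \]
where $f$ is a Steinhaus random multiplicative function, i.e.\ the $\{f(p)\}_p$ are i.i.d.\ uniform on the unit circle.

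The next step is to compare the Dirichlet polynomial with the random Euler product
\[ \mathcal{G}_f(1/2,x):=\prod_{p\le x}\left(1+\frac{g(p)f(p)}{\sqrt p}+\frac{g(p^2)f(p)^2}{p}+\cdots\right), \]
via a smoothed Perron contour, absorbing the truncation error with the second-moment bound coming from \eqref{BHS}, which grows only like $(\log x)^{\alpha}$ in the subcritical regime $\alpha<1$. By independence of $f$ across primes the moment factorizes, and expanding $|1+zf(p)|^{2q}=(1+2\re(zf(p))+|z|^2)^q$ in $|z|=|g(p)|/\sqrt p$ (the linear terms vanishing by rotational symmetry) yields
\[ \mathbb{E}\left|1+\frac{g(p)f(p)}{\sqrt p}+\cdots\right|^{2q}=1+\frac{q^2|g(p)|^2}{p}+O\!\left(\frac{1}{p^{3/2}}\right) \]
for large $p$ (using $|g(p)|\le B$ from \eqref{gBoundCond}). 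Multiplying over $p\le x$ and invoking \eqref{SelbergOrtho} produces the main bound $(\log x)^{q^2\alpha}\cdot O(1)$ whenever $q$ is bounded away from~$0$.

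The main obstacle will be the uniformity in $q$ as $q\to 0$: the implicit constant in the $O(1/p^{3/2})$ remainder above grows as a negative power of $q$, defeating the naive product estimate once $q\lesssim 1/\sqrt{\log\log x}$. To handle this I would adopt the conditioning strategy of Harper \cite{Ha2} as developed by Gerspach \cite{Ge}: split the primes into dyadic ranges $I_j=(\exp(e^{j-1}),\exp(e^j)]$, study the partial Euler products $\mathcal{G}_f^{(j)}$ on each $I_j$, and analyze $\mathbb{E}|\mathcal{G}_f|^{2q}$ through the approximately Gaussian random walk $j\mapsto \log|\mathcal{G}_f^{(j)}|$, whose variance increment per scale is $\sim\alpha/2$. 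Counting scales and quantifying the Gaussian approximation at each one should produce the correction factors $\min\{1/q,\log\log x\}$ and $\min\{1/q^3,(\log\log\log x)^2\}$. The subcriticality $\alpha<1$ is crucial: it keeps this walk in the Gaussian regime, where the $2q$-th moment matches the Gaussian prediction $(\log x)^{q^2\alpha}$, thereby ruling out the multiplicative-chaos-driven suppression responsible for the anomalies in \eqref{Maxim} when $\alpha\ge 1$.
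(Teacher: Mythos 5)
There is a genuine gap in the proposed reduction from the Dirichlet polynomial to the random Euler product, and it is precisely the step where all the difficulty of this theorem resides. Your per-prime expansion correctly gives $\mathbb{E}\bigl[|1+g(p)f(p)p^{-1/2}+\cdots|^{2q}\bigr]=1+q^2|g(p)|^2/p+O(p^{-3/2})$ (with, in fact, a remainder that is uniform in $q\in(0,1/2]$, contrary to your diagnosis), and hence $\mathbb{E}[|\mathcal{G}_f(1/2,x)|^{2q}]\asymp(\log x)^{q^2\alpha}$; this is the easy part and is exactly what Proposition~\ref{MixedRandomMoments} delivers in one line. The problem is your claim that one can pass from $\sum_{n\le x}$ to $\mathcal{G}_f(1/2,x)$ ``via a smoothed Perron contour, absorbing the truncation error with the second-moment bound coming from~\eqref{BHS}.'' The relevant truncation error is the tail $\sum_{n>x,\,P(n)\le x}X(n)g(n)/\sqrt{n}$, whose second moment is $\asymp(\log x)^{\alpha}$; bounding its $2q$-norm by its $2$-norm gives a contribution of size $(\log x)^{q\alpha}$, which is \emph{strictly larger} than the target $(\log x)^{q^2\alpha}$ for every $0<q<1$. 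So this step does not absorb the error; it destroys the bound, and the subcriticality $\alpha<1$ does not rescue it (the comparison $q\alpha>q^2\alpha$ does not care whether $\alpha<1$).

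The paper's proof spends its entire effort on exactly this point. Proposition~\ref{REP} decomposes the polynomial by the size of the largest prime factor of $n$, uses conditional H\"older given the values of $X$ on small primes, smooths the resulting sum, and applies Rankin's trick to the very tail; Proposition~\ref{REP2} then invokes Plancherel (Lemma~\ref{PlancherelLemma}) to rewrite the conditional second moment as an $L^2$ integral of a shorter Euler product along a vertical line slightly to the left of $\re(s)=1/2$. One is left to bound $\mathbb{E}\bigl[(\int_{\mathbb{R}}|G_{k+1}(\tfrac12-\tfrac{2(k+1)}{\log x}+it)|^2/|\tfrac{2(k+1)}{\log x}+it|^2\,dt)^q\bigr]$, a $q$-th moment of a random quadratic integral (the low-moment multiplicative-chaos object), and Section~4 does this not via an explicit Gaussian-random-walk/ballot argument but via iterated H\"older, dividing out by a well-chosen power of $|G_{k+1}(\tfrac12+\sigma)|$ so that Proposition~\ref{MixedRandomMoments} applies to the resulting mixed moments. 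Your paragraph about dyadic prime ranges, conditioning, and a random walk of variance increment $\sim\alpha/2$ captures some of the correct probabilistic intuition, but it is aimed at $\mathbb{E}[|\mathcal{G}_f(1/2,x)|^{2q}]$, which needs none of that, rather than at the $q$-th moment of the critical-line integral, which is where the conditioning and the hypothesis $\alpha<1$ are actually used (the latter to control a geometric series $\sum_r 2^{-r(q^2\alpha+q(1-\alpha))}\ll 1/q$ in Proposition~\ref{MediumT}). In short, you need a mechanism that turns the tail $\{n>x\}$ into an object whose $q$-th moment is genuinely $(\log x)^{q^2\alpha}$ rather than bounding it through $L^2$; without that, the proposal does not close.
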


	The reader might want to compare this estimate to \cite[Theorem 5]{AHZ}, where a uniform bound over small $q$ is deduced for $\alpha = 1$. It should be noted that we have the slightly smaller factor $\min \{ \frac{1}{q}, \log \log x \}$ in place of $\min \{ \frac{1}{q^2}, \log \log x \}$ there. This uses in a fundamental way that $\alpha$ is not too close to $1$, although it is possible that this difference could be an artifact of the proof. 
	
	It should also be remarked that it is unclear to the authors if the dependency on $q$ in the above theorem is anywhere close to optimal. Moreover, achieving a bound with uniformity with respect to $\alpha$, while perhaps possible, will likely also create a blow-up around certain points. For example, a certain contribution that arises in the proof would require one to replace the term $\min \left \{ \frac{1}{q}, \log \log x \right \}$ by $\min \left \{ \frac{1}{q^2}, \frac{1}{q(1-\alpha)}, \log \log x \right \}$, uniformly for (say) $\frac{1}{2} \le \alpha \le 1$.
	
	We also remark that the proof gives very slightly better bounds on certain ranges of $q$, but the above is somewhat less convoluted to state.
	
	
	
	Our arguments also yield estimates for $\Psi_{2 q, g}(x)$ when $\alpha \ge 1$ that are of the same shape as in \eqref{Maxim}, thus generalizing the bounds there from $d_{\alpha}$ with $\alpha \ge 1$ to general $g$ satisfying \eqref{gBoundCond} and \eqref{SelbergOrtho}. In particular, they all apply to $d_\alpha$ for any $\alpha \in \mathbb{C}$, so both \eqref{Maxim} and Corollary \ref{zeta} hold by replacing $\alpha$ by $|\alpha|$ in the assumptions on $\alpha$ and the right-hand sides of the respective estimates. We omit the details since the combination of arguments from \cite{Ge} and bounds from this work yield the respective claims in a rather straightforward manner.


\section{Preliminary results}
In this section we collect together several preliminary results that will be useful in our subsequent work.
We begin by recording the following lemma which allows us to bound sums involving the multiplicative function $g$, since $g$ satisfies \eqref{gBoundCond}. This corresponds to \cite[Number Theory Result 1]{Ha2}, which is a consequence of Lemma 2.1 of Lau, Tenenbaum and Wu \cite{LTW}.
\begin{lemma}\label{HLTW}
	Let $0 < \delta < 1$ and $\alpha \ge 1$. Suppose that $\max \{ 3 ,2 \alpha \} \le y \le z \le y^{10}$ and $1 < u \le v(1-y^{- \delta})$. Then we have
	\[ \sum_{\substack{ u \le n \le v \\ p \, | \, n \Rightarrow y < p \le z }} \alpha^{\Omega(n)} \ll_\delta \frac{(v-u) \alpha}{\log y} \prod_{y < p \le z} \left( 1 - \frac{\alpha}{p} \right)^{-1}. \] 
\end{lemma}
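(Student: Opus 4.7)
Let me denote the sum on the left-hand side by $S(u, v)$ and set
\[ V(w) := \sum_{\substack{n \le w \\ p \mid n \Rightarrow y < p \le z}} \alpha^{\Omega(n)}, \]
so that $S(u, v) = V(v) - V(u^-)$. The plan is to carry out a Selberg--Delange-style analysis for $V$, working with the Dirichlet series
\[ F(s) := \sum_{\substack{n \ge 1 \\ p \mid n \Rightarrow y < p \le z}} \frac{\alpha^{\Omega(n)}}{n^s} = \prod_{y < p \le z} \left(1 - \frac{\alpha}{p^s}\right)^{-1}. \]
Since $F$ is a finite Euler product, it extends analytically to the half-plane $\re(s) > \log \alpha / \log y$, and in particular to $\re(s) \ge 1/2$ by the hypothesis $y \ge 2 \alpha$. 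It also satisfies the trivial bound $|F(\sigma + i \tau)| \le F(\sigma)$ throughout this region.

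I would first apply the truncated Perron formula to write
\[ S(u, v) = \frac{1}{2 \pi i} \int_{c - i T}^{c + i T} F(s) \cdot \frac{v^s - u^s}{s} \, ds + \mathcal{E}, \]
for some $c > 1$ slightly exceeding $1$ and a suitable truncation height $T$. The hypothesis $v - u \ge v y^{-\delta}$ enters crucially at this step, guaranteeing that the interval is long enough for the standard Perron error $\mathcal{E}$ to be absorbed into the target bound; the $\delta$-dependence of the implied constant in the conclusion originates here. Then, using the identity $(v^s - u^s)/s = \int_u^v t^{s-1} \, dt$ and Fubini, the main term rewrites as $\int_u^v H(t) \, dt$, where the ``density kernel'' is
\[ H(t) := \frac{1}{2 \pi i} \int_{c - i T}^{c + i T} F(s) \, t^{s - 1} \, ds. \]

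The heart of the argument is the pointwise estimate
\[ H(t) \ll \frac{\alpha}{\log y} \prod_{y < p \le z} \left(1 - \frac{\alpha}{p}\right)^{-1} \qquad (u \le t \le v), \]
whose integration over $[u, v]$ produces the claimed bound. To establish this, I would shift the contour onto $\re(s) = 1$ (where $F$ is analytic) and bound the integrand via $|F(\sigma + i \tau)| \le F(\sigma)$ combined with the Taylor expansion $\log F(s) = \alpha \sum_{y < p \le z} p^{-s} + O(1)$ near $s = 1$ (valid since $|\alpha/p| \le 1/2$). Mertens' theorem together with $z \le y^{10}$ then controls $F$ in magnitude, while the $1/\log y$ saving in the pre-factor arises from the effective width of the relevant saddle, dictated by the density of primes at scale $y$; heuristically, one extra prime factor at this scale contributes a density factor $1/\log y$ and a weight $\alpha$.

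The main obstacle is extracting the correct pre-factor $\alpha / \log y$ in this density estimate, since a cruder Rankin bound $V(w) \le w^\sigma F(\sigma)$ only produces the Euler-product factor without the $1/\log y$ saving. Depending on technical convenience, an elementary alternative to the contour shift is a Buchstab-style identity: write $n = pm$ with $p$ the largest prime factor, so $p \in (y, z]$ and $m \le v/p$. The number of multiples of $p$ in $[u, v]$ is at most $(v - u)/p + 1 \ll (v - u)/p$ (using $v - u \ge v y^{-\delta}$ to dominate the $+1$), and after a short iteration or a Rankin-type bound on the remaining smooth factor $m$, summing over $p \in (y, z]$ via Mertens produces the required $\frac{1}{\log y} \prod_{y < p \le z}(1 - \alpha/p)^{-1}$ structure, with the leading $\alpha$ coming from the outermost prime factor.
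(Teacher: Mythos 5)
First, note that the paper does not prove this lemma: it is quoted verbatim from \cite[Number Theory Result 1]{Ha2}, which is itself deduced from \cite[Lemma 2.1]{LTW}, so there is no in-paper argument to compare against. Judged on its own terms, your write-up correctly identifies the crux (extracting the factor $\alpha/\log y$), but neither of your two routes actually gets past it, and the first route, as described, provably cannot.

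For the Perron route: after shifting to $\re(s)=1$, the only tools you invoke are $|F(1+i\tau)|\le F(1)$ and Mertens. But with absolute values the contribution of $1/\log y\le|\tau|\le 1$ to $\frac{1}{2\pi i}\int F(s)\frac{v^s-u^s}{s}\,ds$ is only bounded by $\asymp (v-u)\sup_{|\tau|\asymp 1}|F(1+i\tau)|$, and for, say, $\alpha=1$ one has $|F(1+i\tau)|\asymp 1$ there (the phases $\tau\log p$ equidistribute, so $\sum_{y<p\le z}(1-\cos(\tau\log p))/p=O(1)$ and there is no decay in $\tau$). This already exceeds the target $\frac{(v-u)\alpha}{\log y}F(1)$ by a factor of about $\log y$, before one even discusses the Perron truncation error (which forces $T\gg v/(v-u)\ge y^{\delta}$ and makes the trivial bound worse still). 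So the ``heart of the argument'' cannot be completed by taking absolute values on $\re(s)=1$; one needs genuine cancellation from the oscillation of $t^{i\tau}$, i.e.\ a saddle-point analysis that you have replaced by a heuristic. (A minor additional slip: $F$ is not analytic in all of $\re(s)\ge 1/2$ for $\alpha\ge 2$, since $(1-\alpha p^{-s})^{-1}$ has poles at $\re(s)=\log\alpha/\log p$, which exceeds $1/2$ for $p$ near $2\alpha$.)

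For the Buchstab route: the step ``$(v-u)/p+1\ll(v-u)/p$ using $v-u\ge vy^{-\delta}$'' is false in the relevant range. The sum can be nonempty with $v\asymp y$ (its terms are then primes in $(y,z]$), in which case $p\asymp v$ and $(v-u)/p\asymp y^{-\delta}=o(1)$, so the $+1$ dominates. More fundamentally, even where that first step is legitimate, the iteration produces inner sums over intervals $[u/p,v/p]$ of length $(v-u)/p$, which after one or two extractions drops below $1$; the condition $u'\le v'(1-y^{-\delta})$ is inherited but does not prevent this, so the same lattice-point count cannot be reapplied. Controlling the contribution of these short sub-intervals is exactly the content of the lemma, and it is what Shiu's method (as implemented in \cite[Lemma 2.1]{LTW}) is designed for: one factors $n=ab$ with $a$ below a threshold such as $v^{1/2}$, uses Rankin's trick to show that integers whose ``large'' part has many prime factors are rare, and applies a sieve/Brun--Titchmarsh bound on the remaining long sub-intervals $[u/a,v/a]$ --- this last step is where $1/\log y$ genuinely enters. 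None of this is present in your sketch, so the argument does not establish the lemma; citing \cite{LTW} or \cite{Ha2}, as the paper does, is the intended route.
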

We will need the following standard version of Plancherel's Theorem for Dirichlet series (see for example \cite[(5.26)]{MV}), in order to relate the pseudomoments $\Psi_{2q, g}(x)$ to averages of random Euler products. 
\begin{lemma}\label{PlancherelLemma}
	Let $(a_n)_{n \ge 1}$ be a sequence of complex numbers and let $A(s) = \sum_{n \ge 1} \frac{a_n}{n^s}$ denote the associated Dirichlet series with abscissa of convergence $\sigma_c$. Then for any $\sigma > \max \{ 0, \sigma_c \}$, we have
	\begin{equation}\label{Plancherel} \int_0^\infty \frac{ \left| \sum_{n \le x} a_n \right|^2}{x^{1+2 \sigma}} \, dx = \frac{1}{2 \pi} \int_{\mathbb{R}} \left| \frac{A(\sigma + i t)}{\sigma + i t} \right|^2 \, dt. \end{equation}
\end{lemma}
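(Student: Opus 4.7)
The lower bound $\Psi_{2q,g}(x) \gg (\log x)^{q^2\alpha}$ is contained in \eqref{BHS}, so only the upper bound needs proof. My strategy follows Harper's template for low moments of random multiplicative functions, but executed on the arithmetic side: use Plancherel (Lemma \ref{PlancherelLemma}) to reduce the pseudomoment to a moment of the Euler product $\mathcal{G}(s)$ on a line marginally right of $1/2$, and then extract the target estimate $(\log x)^{q^2\alpha}$ via a multi-scale barrier argument tailored to the subcritical regime $\alpha<1$. The key input in the second step is to exploit the exponential tilt $e^{2q W_J}$, where $W_J$ denotes the real part of $\log \mathcal{G}$ on the shifted line, acting on the underlying random-walk structure of the Euler product.

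\textbf{Step 1 (smoothing and Plancherel).} Smooth the sharp cutoff $n\le x$ by convolving with an approximate indicator (e.g.\ a truncated Mellin kernel), and bound the resulting discrepancy via Lemma \ref{HLTW}; the hypothesis $\theta<1/48$ is what makes this error negligible. Applying Lemma \ref{PlancherelLemma} at the line $\sigma=\tfrac12+1/\log x$, the pseudomoment is dominated by a $t$-integral of $|\mathcal{G}(\sigma+it)|^{2q}$ against a Perron-type kernel. For $q=1$ this is a direct application of the lemma; for $q<1$ one interpolates via H\"older with exponents adapted to the Perron weight, reducing the problem to bounding the $2q$-th moment of $\mathcal{G}(\sigma+it)$ over a long interval $t\in[T,2T]$.

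\textbf{Step 2 (Euler factorization and barrier decomposition).} Partition primes into $I_j=(e^{e^{j-1}},e^{e^j}]$ for $1\le j\le J:=\lfloor\log\log x\rfloor$, and factor $\mathcal{G}(\sigma+it)=\prod_{j\le J}\mathcal{G}_j(\sigma+it)$. Let $W_j(t):=\mathrm{Re}\log\prod_{i\le j}\mathcal{G}_i(\sigma+it)$. By \eqref{SelbergOrtho} together with the pseudo-orthogonality of phases $p^{-it}$ in the $t$-average, $W_j$ behaves like a random walk with step variance $\alpha/2$. Set the barrier $T_j:=q\alpha j+C\log(1+j)$ for $C$ large, and define $\mathcal{B}=\{t\in[T,2T]:W_j(t)\le T_j\ \forall j\le J\}$ along with the first-passage sets $\mathcal{B}_k=\{t:k\text{ is the first index with }W_k(t)>T_k\}$. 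On $\mathcal{B}$, a scale-by-scale tilted second-moment argument (the measure $|{\mathcal G}|^{2q}\,dt$ shifts the effective mean of $W_J$ to $q\alpha J$, which matches the barrier slope) yields $\int_{\mathcal{B}}|\mathcal{G}|^{2q}dt/T\ll(\log x)^{q^2\alpha}$ up to polylog factors. On each $\mathcal{B}_k$ factor $\mathcal{G}=\mathcal{G}_{\le k}\mathcal{G}_{>k}$ and combine H\"older with exponents $1/q,\,1/(1-q)$, the Gaussian first-crossing estimate for $\mathcal{B}_k$, and a trivial second-moment bound for $\mathcal{G}_{>k}$; summing over $k$ produces the two $\min$-factors in the theorem (the first tracks the number of effective barrier scales, which saturates at $\log\log x$ for very small $q$, and the second comes from the polynomial tails of the first-crossing distribution).

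\textbf{Main obstacle.} The crucial step is the tilted second-moment bound on $\mathcal{B}$: a naive application of H\"older's inequality $\int|\mathcal{G}|^{2q}dt\le(\int|\mathcal{G}|^2dt)^q\cdot T^{1-q}$ yields only $(\log x)^{q\alpha}$, precisely the weak upper bound recorded in \eqref{BHS}. Improving this to $(\log x)^{q^2\alpha}$ requires extracting the tilted-Gaussian gain $e^{2qW_J}$ scale by scale, matched with the barrier at every level. The hypothesis $\alpha<1$, combined with $q\le 1/2$, enters here in an essential way: it forces the tilted mean $q\alpha J$ to stay strictly below the critical level $\alpha J/2$, which guarantees that the barrier is nontrivial and the Gaussian heuristic is rigorously available without the $\log\log x$ blow-up seen in the critical case $\alpha=1$ treated in \cite{AHZ}. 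Bookkeeping the $q$-dependence precisely enough to produce the explicit $\min$-form factors, uniformly over $q\in(0,1/2]$, is the most delicate technical ingredient.
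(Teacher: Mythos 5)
Your proposal does not address the statement at all. The statement to be proved is Lemma \ref{PlancherelLemma}, the Plancherel identity for Dirichlet series: for $\sigma>\max\{0,\sigma_c\}$,
\begin{equation*}
\int_0^\infty \frac{\bigl|\sum_{n\le x}a_n\bigr|^2}{x^{1+2\sigma}}\,dx=\frac{1}{2\pi}\int_{\mathbb{R}}\left|\frac{A(\sigma+it)}{\sigma+it}\right|^2dt.
\end{equation*}
What you have written is instead a proof sketch of Theorem \ref{Main} (the upper bound for $\Psi_{2q,g}(x)$), complete with barrier decompositions, tilted second moments, and the $\min$-factors from the theorem's conclusion. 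None of that is relevant to the lemma, which is a deterministic, classical identity about a single Dirichlet series with no multiplicative structure, no randomness, and no exponent $q$.

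The actual argument is short. Writing $S(x)=\sum_{n\le x}a_n$ (so $S(x)=0$ for $x<1$), one checks by termwise integration that for $\mathrm{Re}(s)=\sigma>\max\{0,\sigma_c\}$,
\begin{equation*}
\int_1^\infty S(x)\,x^{-s-1}\,dx=\sum_{n\ge1}a_n\int_n^\infty x^{-s-1}\,dx=\frac{A(s)}{s},
\end{equation*}
i.e.\ $A(s)/s$ is the Mellin transform of $S$. Substituting $x=e^u$ turns the left-hand side of \eqref{Plancherel} into $\int_{\mathbb{R}}|S(e^u)e^{-\sigma u}|^2\,du$ and the right-hand side into the $L^2$ norm of its Fourier transform, so the identity is exactly Plancherel's theorem for the Fourier transform; the hypothesis $\sigma>\max\{0,\sigma_c\}$ guarantees square-integrability and justifies the interchange of sum and integral. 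The paper itself does not reprove this but cites \cite[(5.26)]{MV}. You should supply this (or the citation), not an outline of the main theorem.
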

The next result allows us to control short sums of the function $|g(n)|^2$ in terms of its logarithmic mean. This was established by Nair and Tenenbaum (see  \cite[Corollary 3]{NT}), generalizing and refining results of Shiu \cite{Sh}.
\begin{lemma}\label{NairTenenbaum}
	Let $g :\mathbb{N} \to \mathbb{C}$ be a multiplicative function satisfying \eqref{gBoundCond}. Then, uniformly for $y$ in the range $x^{12\theta} \leq y\leq x$ we have 
	$$\sum_{x-y\leq n\leq x} |g(n)|^2 \ll \frac{y}{\log x} \sum_{n\leq x} \frac{|g(n)|^2}{n}.$$
	
	\end{lemma}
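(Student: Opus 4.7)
The plan is to derive this as a direct specialization of Corollary~3 of Nair and Tenenbaum \cite{NT}, applied to the non-negative multiplicative function $F := |g|^2$. So the work is entirely to check that $F$ satisfies the hypotheses of that corollary, and that the exponent $12\theta$ comes out of tracking their admissible range.

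First I would observe that since $g$ is multiplicative, so is $F$. The pointwise condition \eqref{gBoundCond} yields both $F(n) \le A^2 n^{2\theta}$ on all integers and $F(p^k)\le B^{2k}$ on prime powers. These are exactly the two bounds required to place $F$ in the class of functions covered by Nair--Tenenbaum's Shiu-type short-interval estimate: a global polynomial growth bound with small exponent, together with uniform control at prime powers (so that the Euler-product weights $\prod_{p\le x}\bigl(1+F(p)/p+F(p^2)/p^2+\cdots\bigr)$ that appear on the right-hand side of their theorem can be comparably replaced by $\sum_{n\le x}F(n)/n$, via the standard Selberg--Delange-style local factor estimate; this is already how Nair--Tenenbaum state their Corollary~3).

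Next I would invoke \cite[Corollary~3]{NT} with the effective exponent $\epsilon = 2\theta$. That corollary produces, for some absolute constant $c$, a bound of the shape
\[
\sum_{x-y\le n\le x} F(n)\ \ll\ \frac{y}{\log x}\sum_{n\le x}\frac{F(n)}{n},
\]
valid uniformly in the range $x^{c\epsilon}\le y\le x$. The implicit constant depends only on $A$, $B$, and $\theta$, which are fixed. In the formulation used by Nair--Tenenbaum the admissible constant is $c=6$, so specializing to $\epsilon = 2\theta$ gives exactly $y\ge x^{12\theta}$, matching the stated range. The hypothesis $\theta<1/48$ in \eqref{gBoundCond} ensures $12\theta<1/4$, so the range $x^{12\theta}\le y\le x$ is non-trivial.

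The only ``obstacle'' is really notational: confirming that the Euler-product right-hand side that appears in the Nair--Tenenbaum corollary agrees, up to an absolute constant, with $\sum_{n\le x}|g(n)|^2/n$. This is immediate from the prime-power bound $F(p^k)\le B^{2k}$ (which controls tail contributions to each local factor), together with multiplicativity of $F$ implying $\sum_{n\le x}F(n)/n\asymp\prod_{p\le x}\bigl(1+F(p)/p+F(p^2)/p^2+\cdots\bigr)$. Once this is noted, Lemma~\ref{NairTenenbaum} is just the specialization of \cite[Corollary~3]{NT} to $F=|g|^2$ with $\epsilon=2\theta$, and no further analytic input is required.
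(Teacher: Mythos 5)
Your proposal matches the paper exactly: the paper gives no proof of this lemma, simply citing \cite[Corollary 3]{NT} applied to $|g|^2$, which is precisely the specialization you carry out (with the hypothesis check that $F=|g|^2$ inherits the required polynomial and prime-power bounds from \eqref{gBoundCond}). Your verification of the admissible class and of the range $y\ge x^{12\theta}$ is the right justification for the citation, so nothing further is needed.
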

	
	Let $X : \mathbb{N} \to \mathbb{C}$ denote a \textit{Steinhaus random multiplicative function}, i.e. a (random) completely multiplicative function whose values $(X(p))_{p \text{ prime}}$ are independent and identically distributed (i.i.d.) according to a uniform distribution on the complex unit circle (a Steinhaus distribution). Our next result provides estimates for the expectations of certain random Euler products, and is a generalization of \cite[Lemma 8]{Ge}, compare also \cite[Euler Product Result 1]{Ha1}. 
	\begin{proposition}\label{MixedRandomMoments}
 Let 
	$g$ be a multiplicative function satisfying \eqref{gBoundCond} for some constants $A, B > 0$ and fixed $0 < \theta < 
\frac{1}{4}$. Suppose that $X$ is a Steinhaus random multiplicative function and define, for $2 \le y < z$,
	\[ F(s) := \prod_{y \leq p \le z} \left(1 + \frac{g(p) X(p)}{p^s} + \frac{g(p^2) X(p)^2}{p^{2 s}} + \dots \right). \]
	Then for any real numbers $\sigma \ge -10/\log z$,  $|a| ,|b|\leq 10$ and any real number $t$ we have
\begin{equation}\label{MixedRandomMoments1}
\begin{aligned}
 & \mathbb{E} \left[\left|F\left(\frac{1}{2} + \sigma \right)\right|^{2a} \left|F\left( \frac{1}{2} + \sigma + i t \right)\right|^{2b} \right] \\
 &= \exp \left(\sum_{y \leq  p \le z}\big(a^2+b^2+2ab\cos(t\log p)\big) \frac{|g(p)|^2}{p^{1 + 2 \sigma}} + O\left(\frac{1}{y^{1-4 \theta} \log y}\right) \right), 
 \end{aligned}
\end{equation}
	where the implicit constant might depend on $A, B,$ and  $\theta$. Moreover, if $|t| \ll 1/\log z$ then 
\begin{equation}\label{MixedRandomMoments2}
 \mathbb{E} \left[\left|F\left( \frac{1}{2} + \sigma \right)\right|^{2a} \left|F\left( \frac{1}{2} + \sigma + i t \right)\right|^{2 b} \right] \ll \exp \left((a+b)^2 \sum_{y \leq p \le z} \frac{|g(p)|^2}{p}\right).
\end{equation}
\end{proposition}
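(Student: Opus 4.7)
The plan is to exploit independence of the $X(p)$ and reduce to a per-prime analysis, along the lines of \cite[Lemma 8]{Ge} and \cite[Euler Product Result 1]{Ha1}. Setting $\sigma' := 1/2 + \sigma$ and writing
\[
E_p(s) := 1 + \frac{g(p) X(p)}{p^s} + \frac{g(p^2) X(p)^2}{p^{2s}} + \cdots,
\]
independence gives
\[
\mathbb{E}\bigl[|F(\sigma')|^{2a}|F(\sigma'+it)|^{2b}\bigr] = \prod_{y \le p \le z} \mathbb{E}\bigl[|E_p(\sigma')|^{2a}|E_p(\sigma'+it)|^{2b}\bigr],
\]
and the hypotheses $\sigma \ge -10/\log z$ and $p \le z$ yield $p^{\sigma'} \ge e^{-10}p^{1/2}$, so that for $p$ larger than some constant $C(A,B,\theta)$ the branch $L_p(s) := \log E_p(s)$ is defined by an absolutely convergent Taylor series. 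The finitely many smaller primes are handled separately and absorbed into the error.

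For the main step, I would write $|E_p(\sigma')|^{2a}|E_p(\sigma'+it)|^{2b} = \exp(2a\,\textup{Re}\,L_p(\sigma') + 2b\,\textup{Re}\,L_p(\sigma'+it))$, Taylor expand the outer exponential, and use the Steinhaus orthogonality $\mathbb{E}[X(p)^j \overline{X(p)^k}] = \mathbf{1}[j=k]$ to kill all unbalanced terms. The surviving leading contribution pairs the linear-in-$X(p)$ term of the first exponent with the conjugate-linear term of the second, giving
\[
\bigl(ag(p)+bg(p)p^{-it}\bigr)\overline{\bigl(ag(p)+bg(p)p^{-it}\bigr)}\,p^{-(1+2\sigma)} = \frac{|g(p)|^2(a^2+b^2+2ab\cos(t\log p))}{p^{1+2\sigma}},
\]
exactly matching the main term in \eqref{MixedRandomMoments1}. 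All remaining contributions (prime-power coefficients in $L_p$ with $k \ge 2$, higher-order Taylor coefficients of the outer exponential) are handled via the two available bounds $|g(p^k)| \le \min(Ap^{k\theta}, B^k)$: using the polynomial bound at $k=2$ yields a per-prime contribution of $\ll p^{-(1-4\theta)}$, and summing by the prime number theorem gives the error $O(1/(y^{1-4\theta}\log y))$.

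For \eqref{MixedRandomMoments2}, I apply \eqref{MixedRandomMoments1} together with the elementary estimate $a^2+b^2+2ab\cos(t\log p) \le (a+b)^2 + |ab|(t\log p)^2$, which follows from $2ab(\cos(t\log p)-1) \le 2|ab|\cdot(t\log p)^2/2$. Since $|t|\log z \ll 1$, the resulting excess in the exponent is $\ll |ab|t^2\sum_{p\le z}(\log p)^2/p \ll 1$ by Mertens, and a similar splitting of the sum at $p = z^{1/10}$ shows that replacing $p^{1+2\sigma}$ by $p$ in the main term contributes at most $O(1)$ to the exponent (for small $p$ via $p^{-2\sigma}-1 \ll |\sigma|\log p$, and for large $p$ via $p^{-2\sigma} \ll 1$ together with \eqref{SelbergOrtho}). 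Both corrections are absorbed into the implicit constant. The main obstacle I anticipate is the careful bookkeeping in the per-prime expansion needed to extract the sharp error $O(1/(y^{1-4\theta}\log y))$, which requires using the polynomial bound $Ap^{k\theta}$ at $k = 2$ while switching to the geometric bound $B^k$ to sum over larger $k$ without losing the sharp exponent.
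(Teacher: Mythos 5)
Your proposal follows essentially the same route as the paper: factor by independence, take logarithms per prime, use Steinhaus orthogonality to isolate the paired linear terms giving $|g(p)|^2(a^2+b^2+2ab\cos(t\log p))/p^{1+2\sigma}$, control the remainder via the two bounds on $|g(p^k)|$, and for the second estimate expand $\cos(t\log p)=1+O(|t|^2(\log p)^2)$ and remove the $p^{-2\sigma}$ factor at $O(1)$ cost. One small slip: the per-prime remainder should be $O(p^{-(2-4\theta)})$, not $p^{-(1-4\theta)}$ — the latter would not be summable over primes — although your stated summed error $O(1/(y^{1-4\theta}\log y))$ is indeed what $\sum_{p\ge y}p^{-(2-4\theta)}$ produces; also, the appeal to \eqref{SelbergOrtho} in the last step is unnecessary, since $|g(p)|\le B$ and Mertens already give the $O(1)$ bound, which matters because the proposition is stated without assuming \eqref{SelbergOrtho}.
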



\begin{proof} Let 
$$ F_p(s) = 1 + \sum_{j=1}^{\infty} \frac{g(p^j) X(p)^j}{p^{js}}.$$
Then, by the independence of the $X(p)$'s we have 
$$
\mathbb{E} \left[\left|F\left( \frac{1}{2} + \sigma \right)\right|^{2a} \left|F\left( \frac{1}{2} + \sigma + i t \right)\right|^{2 b} \right]= 
\prod_{y\leq p\leq z} \mathbb{E} \left[\left|F_p\left(\frac12+\sigma\right)\right|^{2a}\left|F_p\left(\frac12+\sigma+it\right)\right|^{2b}\right].
$$
Furthermore, for any $s$ with $\re(s)=1/2+\sigma$, any prime $p\leq z$ and any real number $|c|\leq 10$  we have 
\begin{align*}
& \left|F_p\left(s \right)\right|^{2c}  = \exp\big(2c \re \log F_p(s)\big)\\
&=  \exp\left(2c \re \log \left(1 + \frac{g(p) X(p)}{p^s} + \frac{g(p^2) X(p)^2}{p^{2 s}} + \frac{g(p^3) X(p)^3}{p^{3 s}} + O\left(\frac{1}{p^{2-4\theta}}\right)\right)\right)\\
&=  \exp\left(2c \re \left(\frac{g(p) X(p)}{p^s} + \frac{2g(p^2) X(p)^2-g(p)^2X(p)^2}{2p^{2 s}} + \frac{3g(p^3) X(p)^3+ g(p)^3X(p)^3}{3p^{3 s}} + O\left(\frac{1}{p^{2-4\theta}}\right)\right)\right),\\
\end{align*}
since $p^{-\sigma}\leq p^{10/\log z} \ll 1$, which follows from the fact that $p\leq z$. 
We shall use these estimates for $\left|F_p\left( 1/2 + \sigma \right)\right|^{2a}$ and  $\left|F_p\left(1/2+ \sigma + i t \right)\right|^{2 b}$ and apply the series expansion of the exponential to compute the expectation $$\mathbb{E} \left[\left|F_p\left( \frac{1}{2} + \sigma \right)\right|^{2a} \left|F_p\left( \frac{1}{2} + \sigma + i t \right)\right|^{2 b} \right].$$
To this end, we shall use the following easy observations to simplify the computations. First we note that for any complex number $w$, and positive integer $n$ one has 
$\ex[\re(w X(p)^n)]=0$. Moreover, by translation-invariance law $w X(p)$ has the same distribution as $|w| X(p)$, and hence $\ex[\re(w X(p))^3]=0$. One can also verify that $\ex[\re(w X(p))^2\re(X(p))]=0$. 
Using these facts, an easy computation  shows that for $p\leq z$
\begin{align*}
& \mathbb{E} \left[\left|F_p\left( \frac{1}{2} + \sigma \right)\right|^{2a} \left|F_p\left( \frac{1}{2} + \sigma + i t \right)\right|^{2 b} \right]\\
&= 1+ \frac{2}{p^{1+2\sigma}}\ex\left[\Big(a\re\big(g(p) X(p)\big)+ b \re\big(g(p) X(p) p^{-it}\big)\Big)^2\right] + O\left(\frac{1}{p^{2-4\theta}}\right).
\end{align*}
Finally, using that $\ex[(\re(w X(p)))^2]= |w|^2\ex[(\re X(p))^2]= |w|^2/2$  and $\ex[(\re X(p))(\re(w X(p))]=|w| \cos(\arg w)/2,$ we deduce that  
\begin{align*}
\mathbb{E} \left[\left|F_p\left( \frac{1}{2} + \sigma \right)\right|^{2a} \left|F_p\left( \frac{1}{2} + \sigma + i t \right)\right|^{2 b} \right]
&= 1+ (a^2+b^2+2ab \cos(t\log p))\frac{|g(p)|^2}{p^{1+2\sigma}}+ O\left(\frac{1}{p^{2-4\theta}}\right)\\
&= \exp\left((a^2+b^2+2ab \cos(t\log p))\frac{|g(p)|^2}{p^{1+2\sigma}}+ O\left(\frac{1}{p^{2-4\theta}}\right)\right).
\end{align*}
Taking the product of this estimate over the primes $y\leq p\leq z$ implies \eqref{MixedRandomMoments1}.

Now, to obtain \eqref{MixedRandomMoments2}, we use that $|g(p)|\leq B$, $|t| \ll 1/\log z$ and $p^{-\sigma}\ll 1$ to obtain that 
$$
\sum_{y\leq p\leq z}\frac{|g(p)|^2\cos(t\log p)}{p^{1+2\sigma}}= \sum_{y\leq p\leq z} \frac{|g(p)|^2}{p^{1+2\sigma}} +O\left(|t|^2\sum_{y\leq p\leq z}\frac{(\log p)^2}{p}\right)= \sum_{y\leq p\leq z} \frac{|g(p)|^2}{p^{1+2\sigma}} + O(1). 
$$
Thus, if we put $\sigma_0= -10/\log z$ we deduce that 
$$
\sum_{y\leq p \leq z} \big(a^2+b^2+2ab \cos(t\log p)\big)\frac{|g(p)|^2}{p^{1+2\sigma}}
= (a+b)^2\sum_{y\leq p \leq z} \frac{|g(p)|^2}{p^{1+2\sigma}} +O(1) \leq (a+b)^2\sum_{y\leq p \leq z} \frac{|g(p)|^2}{p^{1+2\sigma_0}} +O(1).
$$
The result follows upon noting that 
$$
\sum_{y\leq p \leq z} \frac{|g(p)|^2}{p^{1+2\sigma_0}} +O(1)=\sum_{y\leq p \leq z} \frac{|g(p)|^2}{p}  +O\left(\sum_{y\leq p \leq z} \frac{|\sigma_0|\log p}{p}+1\right)= \sum_{y\leq p \leq z} \frac{|g(p)|^2}{p}  +O\left(1\right).
$$
This completes the proof.

\end{proof}
	
	\section{From pseudomoments to random Euler product integrals}
	
 In this section, we establish an upper bound for the pseudomoments $\Psi_{2q,g}(x)$ in terms of certain moments of integrals of random Euler products. We let $X$ be a Steinhaus random multiplicative function and define  for $\re(s) > \theta$ 
\begin{equation}\label{DefGk}
 G_k(s) := \prod_{p \le x^{e^{-k}}} \left( 1 + \frac{X(p) g(p)}{p^s} + \frac{X(p)^2 g(p^2)}{p^{2 s}} + \dots \right) 
\end{equation}
to be the (truncated) Euler product associated to $X g$  over primes up to $x^{e^{-k}}$. We also let $P(n)$ denote the largest prime factor of $n$.
\begin{proposition}\label{REP}
		Let $g : \mathbb{N} \to \mathbb{C}$ be a multiplicative function satisfying \eqref{gBoundCond}. Let $x$ be sufficiently large, and put $K = [\log \log \log x]$. 
		Then, uniformly over $x$ and $0 
		< q \le 1/2$, we have
		\begin{align*}
		\Psi_{2q,g}(x) \ll \frac{1}{(\log x)^q} &\sum_{0 \le k \le K} \mathbb{E} \Bigg[ \bigg( \int_1^{x^{1-e^{-(k+1)}}} \bigg| \sum_{\substack{n > z \\ P(n)  \le x^{e^{-(k+1)}} }} \frac{X(n) g(n) }{\sqrt{n}}  \bigg|^2 \, \frac{dz}{z^{1-2 k / \log x}} \bigg)^q \bigg] \\ + &\sum_{0 \le k \le K+1} e^{- e^k q} \mathbb{E} \left[ |G_k(1/2)|^{2 q} \right] + K \exp 
		\left(-(1+o(1))q \sqrt{\log x}\right) + 1.
		\end{align*}
	\end{proposition}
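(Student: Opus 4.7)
The plan is to follow the standard randomisation strategy, then implement an iterative decomposition à la Harper based on the size of the largest prime factor of $n$, crucially exploiting the subadditivity $|a+b|^{2q} \le |a|^{2q} + |b|^{2q}$ which is available precisely because $2q \le 1$. First, I would replace the pseudomoment by a random multiplicative moment: writing $X$ for a Steinhaus random multiplicative function, the standard orthogonality argument (cf.\ \cite{BHS, Ge}) gives
\[ \Psi_{2q,g}(x) = \mathbb{E}\, \Big| \sum_{n \le x} \frac{g(n) X(n)}{\sqrt{n}} \Big|^{2q}. \]
Setting $x_k := x^{e^{-k}}$, I would split the sum according to whether $P(n) \le x_{K+1}$, $P(n) > x/e^{\sqrt{\log x}}$, or $P(n) \in (x_{k+1}, x_k]$ for some $0 \le k \le K$. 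The ``very rough'' piece can be controlled directly by a second-moment estimate combined with a tail bound on $P(n)$, producing the $K \exp(-(1+o(1))q\sqrt{\log x})$ remainder. Subadditivity then reduces the task to estimating, for each $0 \le k \le K$, the $2q$-th moment of
\[ T_k := \sum_{\substack{n \le x \\ P(n) \in (x_{k+1}, x_k]}} \frac{g(n) X(n)}{\sqrt{n}}, \]
separately from the very smooth contribution.

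For $T_k$ I would factor $n = m \ell$ with $m$ an $x_{k+1}$-smooth number and $\ell$ composed entirely of primes in $(x_{k+1}, x_k]$, so that $X(m)$ and $X(\ell)$ are independent. Writing $\sum_{m \le x/\ell,\, P(m) \le x_{k+1}}$ as $G_{k+1}(1/2) - \sum_{m > x/\ell,\, P(m) \le x_{k+1}}$ splits $T_k$ into a ``main'' and a ``tail'' part. The main part $G_{k+1}(1/2) \cdot \sum_\ell \tfrac{g(\ell) X(\ell)}{\sqrt{\ell}}$ is a product of independent random factors; taking $2q$-th expectation factors, and Rankin's trick applied to the $\ell$-sum with shift $\sigma = 1/\log x_k = e^k/\log x$ produces the decay $x^{-e^k/\log x} = e^{-e^k}$, which after raising to the $q$-th power yields the $\sum_k e^{-e^k q}\mathbb{E}|G_k(1/2)|^{2q}$ contribution (with the very smooth endpoint absorbed into $k = K+1$). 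The tail part is the delicate piece: after reparametrising by $z = x/\ell$ (so that $\ell$ varies over scale $x/z$), one applies Plancherel (Lemma \ref{PlancherelLemma}) to the $\ell$-variable on the shifted vertical line $\mathrm{Re}(s) = -k/\log x$, which converts the tail contribution into
\[ \int_1^{x^{1-e^{-(k+1)}}} \bigg| \sum_{\substack{m > z \\ P(m) \le x_{k+1}}} \frac{g(m) X(m)}{\sqrt{m}} \bigg|^2 \frac{dz}{z^{1 - 2k/\log x}}, \]
with the overall $(\log x)^{-q}$ prefactor arising from the Plancherel normalisation combined with a Jensen-type manoeuvre to pull the $q$-th power outside.

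The main obstacle will be ensuring that the two shift parameters are chosen consistently: the Rankin shift $e^k/\log x$ that generates the $e^{-e^k q}$ saving must dovetail with the Plancherel shift $-k/\log x$ that produces the exponent $-1 + 2k/\log x$ in the $z$-weight, so that the ``main'' and ``tail'' contributions cover the decomposition cleanly and without redundancy. A secondary difficulty is that the $q \le 1/2$ subadditivity can only be used once per decomposition layer without compounding losses, so one must carefully allocate subadditivity (for splitting across scales $k$) versus Cauchy--Schwarz or Hölder-type inequalities (for pulling $q$ past the Plancherel integral), while ensuring the final bound is uniform in $q \in (0, 1/2]$ as claimed.
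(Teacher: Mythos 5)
Your broad strategy -- Bohr correspondence, decomposition by the size of the largest prime factor, and subadditivity of $|\cdot|^{2q}$ for $2q\le 1$ -- is the right starting point, but there are several serious mismatches with what Proposition \ref{REP} actually says and with what can be made to work.

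First, a structural issue with the decomposition itself. You split $\sum_{n\le x}$ by $P(n)$ into pieces $T_k$ that are still indexed by $n\le x$. The paper instead first completes the smooth sum to a full Euler product (writing $\sum_{n\le x} = \sum_{P(n)\le x} - \sum_{n>x,\,P(n)\le x}$), extracts $\mathbb{E}|G(1/2)|^{2q}$ from the first piece, and only then splits the \emph{correction term} $\sum_{n>x,\,P(n)\le x}$ by the range of $P(n)$. The constraint $n>x$ is essential: it is what eventually produces the super-exponentially decaying factor $e^{-e^k q}$ in $\sum_k e^{-e^k q}\mathbb{E}[|G_k(1/2)|^{2q}]$. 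In your version the ``main'' piece $G_{k+1}(1/2)\cdot\sum_\ell g(\ell)X(\ell)/\sqrt{\ell}$ has $\ell$ ranging down to roughly $x^{e^{-(k+1)}}$, so Rankin's trick with shift $e^k/\log x$ gives at best a bounded saving, not $e^{-e^k}$; the claimed factorisation of the $2q$-norm then contributes $\sum_k\mathbb{E}|G_{k+1}(1/2)|^{2q}\cdot O(1)$, which does \emph{not} match the statement.

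Second, Plancherel does not belong in this proposition. The $z$-integral $\int_1^{x^{1-e^{-(k+1)}}}|\sum_{n>z,\,P(n)\le x^{e^{-(k+1)}}}X(n)g(n)/\sqrt n|^2\,dz/z^{1-2k/\log x}$ in the statement is a ``physical'' integral over the truncation parameter $z$, not a frequency-side integral produced by Lemma \ref{PlancherelLemma}; the Plancherel step is the content of the subsequent Proposition \ref{REP2}. In the paper this $z$-integral arises from smoothing the sharp cut-off $n>x/m$ (replacing $m$ by an average over $t\in[m,m(1+1/Y)]$), estimating the short sum $\sum_{t/(1+1/Y)<m\le t}Y|g(m)|^2/m^2$ via the Lau--Tenenbaum--Wu / Harper smooth-number bound (Lemma \ref{HLTW}), and substituting $z=x/t$; the weight $z^{-1+2k/\log x}$ and the $(\log x)^{-q}$ prefactor come from bounding $\log(x/z)\gg z^{-2k/\log x}\log x$. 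None of this is in your sketch.

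Third, the remainder $K\exp(-(1+o(1))q\sqrt{\log x})$ is not a ``very rough $P(n)$'' contribution as you suggest; there is no case $P(n)>x/e^{\sqrt{\log x}}$ in the paper at all. It is the error introduced by the smoothing step above, controlled via Nair--Tenenbaum (Lemma \ref{NairTenenbaum}), and $\sqrt{\log x}$ enters via the choice $Y=\exp(\sqrt{\log x})$ of the smoothing scale.

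Finally, you do not invoke Harper's key device of conditioning on $(X(p))_{p\le x^{e^{-(k+1)}}}$ and applying H\"older to the conditional expectation, which is how the paper legitimately passes from the $2q$-norm of a random Dirichlet polynomial to the $q$-norm of a conditional second moment $\sum_m\frac{|g(m)|^2}{m}\,\big|\sum_{n>x/m,\,P(n)\le x^{e^{-(k+1)}}}X(n)g(n)/\sqrt{n}\big|^2$. Your alternative of writing the $m$-sum as a difference $G_{k+1}(1/2)-\text{tail}$ and using subadditivity a second time is not the same manoeuvre, and it is not clear how it would recover the claimed bound with its particular $z$-integral and $(\log x)^{-q}$ prefactor.
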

We note that the proof of this proposition is rather similar to the one of \cite[Proposition 2.3]{Ge} combined with the modifications in \cite[Theorem 5]{AHZ} in order to achieve uniformity over $q$. We will nonetheless include the proof for the sake of completeness.
\begin{proof}
The Bohr correspondence (compare e.g. \cite[Section 3]{SS}) implies that
	\[ \Psi_{2 q, g}(x) = \Big\Vert \sum_{n \le x} \frac{g(n) X(n)}{\sqrt{n}} \Big\Vert_{2 q}^{2q}, \]
	where $\Vert \cdot \Vert_{2 q} = \mathbb{E}[ |\cdot|^{2 q}]^{1/{2 q}}$. Note that this does not define a norm when $q < 1/2$, but only a pseudonorm (but we might still sometimes refer to it as a norm). 
	
	Denoting $G(s):=G_0(s)$, observe that 
	\begin{align*}
	\Big\Vert \sum_{n \le x} \frac{X(n) g(n)}{\sqrt{n}} \Big\Vert_{2 q}^{2q}
	&\le \Big\Vert \sum_{P(n) \le x} \frac{X(n) g(n)}{\sqrt{n}} \Big\Vert_{2 q}^{2q} + \Big\Vert \sum_{\substack{n > x \\ P(n) \le x}} \frac{X(n) g(n)}{\sqrt{n}} \Big\Vert_{2 q}^{2q} \\
	&= \Big \Vert \prod_{p \le x} \left( 1 + \frac{X(p) g(p) }{p^{1/2}} + \frac{X(p)^2 g(p^2)}{p} + \dots \right)\Big \Vert_{2 q}^{2 q} + \Big\Vert \sum_{\substack{n > x \\ P(n) \le x}} \frac{X(n) g(n)}{\sqrt{n}} \Big\Vert_{2 q}^{2q}  \\
	&= \mathbb{E} \left[ |G(1/2)|^{2 q} \right] + \Big\Vert \sum_{\substack{n > x \\ P(n) \le x}} \frac{X(n) g(n)}{\sqrt{n}} \Big\Vert_{2 q}^{2q}.
	\end{align*}
	We can then subdivide the sum according to the size of the largest prime factor, to obtain that
	\begin{align*} \Big\Vert \sum_{n \le x} \frac{X(n) g(n)}{\sqrt{n}} \Big\Vert_{2 q}^{2q} \le \mathbb{E} \left[ |G(1/2)|^{2 q} \right] + \sum_{0 \le k \le K} &\Big\Vert \sum_{\substack{n > x \\ x^{e^{-(k+1)} < P(n) \le x^{e^{-k}} } }} \frac{X(n) g(n)}{\sqrt{n}} \Big\Vert_{2 q}^{2 q} \\ + &\Big \Vert \sum_{\substack{n > x \\ P(n) \le x^{e^{-(K+1)}}  }} \frac{X(n) g(n)}{\sqrt{n}} \Big \Vert_{2 q}^{2 q}. 
	\end{align*}
	In order to bound the last term, we can trivially bound the $2q$-norm by the $2$-norm and use orthogonality to deduce that 
	\[ \Big \Vert \sum_{\substack{n > x \\ P(n) \le x^{e^{-(K+1)}}}} \frac{X(n) g(n)}{\sqrt{n}} \Big \Vert_{2 q}^{2 q} \le \bigg(\sum_{\substack{n > x \\ P(n) \le x^{e^{-(K+1)}}}} \frac{|g(n)|^2}{n}\bigg)^q.  \]
	But this can be dealt with by means of Rankin's trick: For any constant $C>0$ and any sufficiently large $y$ (depending only on $C$), we have
	\begin{align*}
	\sum_{\substack{n > x \\ P(n) \le y  }} \frac{|g(n)|^2}{n} &\le x^{-C/\log y} \sum_{P(n) \le y } \frac{|g(n)|^2}{n^{1-C/\log y}} = x^{-C/\log y} \prod_{p \le y} \left(1 + \frac{|g(p)|^2}{p^{1-C/\log y}} + \frac{|g(p^2)|^2}{p^{2(1-C/\log y)}} + \dots \right) \\
	&\ll x^{-C/\log y} \prod_{p \le y} \left( 1 + \frac{B^2}{p^{1-C/\log y}} + O \left( p^{-3/2} \right) \right) \\ &\ll x^{-C/\log y} \exp \left( B^2 \sum_{p \le y} \frac{1}{p^{1-C/\log y}} \right) \ll x^{-C/\log y} (\log y)^{B^2}. 
	\end{align*}
	
	Taking $y = x^{1/\log \log x}$ and $C=B^2$ and using that $K = [\log \log \log x]$ thus gives
	\[ \sum_{\substack{n > x \\ P(n) \le x^{e^{-(K+1)}}  }} \frac{|g(n)|^2}{n} \ll (\log x)^{-B^2} \left( \frac{\log x}{\log \log x} \right)^{B^2} \ll 1. \]
	Putting the bounds up to this point together tells us that
	\begin{equation} \Psi_{2q,g}(x) \ll \sum_{0 \le k \le K} \Big\Vert \sum_{\substack{n > x \\ x^{e^{-(k+1)} < P(n) < x^{e^{-k}} } }} \frac{X(n) g(n)}{\sqrt{n}} \Big\Vert_{2 q}^{2 q} +  \mathbb{E} \left[ |G(1/2)|^{2 q} \right] + 1. \end{equation}
	
	Next, let $\mathbb{E}^{(k)}$ denote the conditional expectation given $(X(p))_{p \le x^{e^{-(k+1)}}}$. Using H\"{o}lder's inequality for conditional expectations as well as the independence of $X(p)$ at different primes and orthogonality (compare \cite[Proposition 1]{Ha2}), we have
	\begin{align*}
	&\quad \; \bigg\Vert \sum_{\substack{n > x \\ x^{e^{-(k+1)} < P(n) \le x^{e^{-k}} } }} \frac{X(n) g(n)}{\sqrt{n}} \bigg\Vert_{2 q}^{2 q} \\
	&= \bigg \Vert \sum_{\substack{ m > 1 \\ p \, | \, m \Rightarrow x^{e^{-(k+1)}} < p \le x^{e^{-k}}}} \frac{X(m) g(m)}{\sqrt{m}} \sum_{\substack{ n > x/m \\ P(n)\le x^{e^{-(k+1)}} }} \frac{X(n) g(n)}{\sqrt{n}} \bigg \Vert_{2 q}^{2 q} \\
	&= \mathbb{E} \Bigg[ \mathbb{E}^{(k)} \Bigg[ \; \bigg| \sum_{\substack{m > 1 \\ p \, | \, m \Rightarrow x^{e^{-(k+1)}} < p \le x^{e^{-k}} }} \frac{X(m) g(m)}{\sqrt{m}} \sum_{\substack{ n > x/m \\ P(n)\le x^{e^{-(k+1)}} }} \frac{X(n) g(n)}{\sqrt{n}} \bigg|^{2 q} \; \Bigg] \Bigg] \\
	&\le \mathbb{E} \Bigg[ \Bigg( \mathbb{E}^{(k)} \Bigg[ \bigg|\sum_{\substack{ m > 1 \\ p \, | \, m \Rightarrow x^{e^{-(k+1)}} < p \le x^{e^{-k}} }} \frac{X(m) g(m)}{\sqrt{m}} \sum_{\substack{ n > x/m \\ P(n)\le x^{e^{-(k+1)}} }} \frac{X(n) g(n)}{\sqrt{n}}  \bigg|^2 \Bigg] \Bigg)^q \Bigg] \\
	&= \bigg \Vert \sum_{\substack{m > 1 \\ p \, | \, m \Rightarrow x^{e^{-(k+1)}} < p \le x^{e^{-k}} }} \frac{|g(m)|^2}{m} \; \bigg| \sum_{\substack{ n > x/m \\ P(n)\le x^{e^{-(k+1)}} }} \frac{X(n) g(n)}{\sqrt{n}} \bigg|^2 \, \bigg \Vert_q^q.
	\end{align*}
	
	The next step is to smoothen the inner sum. Again we proceed in a very similar fashion to \cite[Proposition 1]{Ha2}, setting $Y=\exp(\sqrt{\log x})$ (say) and noting that
	\begin{align}
	&\bigg \Vert \sum_{\substack{m > 1 \\ p \, | \, m \Rightarrow x^{e^{-(k+1)}} < p \le x^{e^{-k}} }} \frac{|g(m)|^2}{m} \; \bigg| \sum_{\substack{ n > x/m \\ P(n)\le x^{e^{-(k+1)}} }} \frac{X(n) g(n) }{\sqrt{n}} \bigg|^2 \, \bigg \Vert_q^q \nonumber \\
	\ll \, &\bigg \Vert \sum_{\substack{m > 1 \\ p \, | \, m \Rightarrow x^{e^{-(k+1)}} < p \le x^{e^{-k}} }} \frac{Y |g(m)|^2}{m^2} \int_m^{m(1+1/Y)} \bigg| \sum_{\substack{ n > x/t \\ P(n)\le x^{e^{-(k+1)}} }} \frac{X(n) g(n)}{\sqrt{n}} \bigg|^2 \, dt \, \bigg \Vert_q^q \nonumber \\
	+ \, &\bigg \Vert \sum_{\substack{m > 1 \\ p \, | \, m \Rightarrow x^{e^{-(k+1)}} < p \le x^{e^{-k}} }} \frac{Y |g(m)|^2}{m^2} \int_m^{m(1+1/Y)} \bigg| \sum_{\substack{ x/t < n \le x/m \\ P(n)\le x^{e^{-(k+1)}} }} \frac{X(n) g(n)}{\sqrt{n}} \bigg|^2 \, dt \, \bigg \Vert_q^q. \label{SmoothingError}
	\end{align}
	The range of summation for the inner sum in the second term is rather small, so we might expect this to only give a minor contribution. Indeed, trivially bounding the $q$-norm by the $1$-norm, pulling the expectation inside and then using orthogonality, the second term in (\ref{SmoothingError}) is
	\begin{align*}
	&\le \Bigg( \sum_{\substack{m > 1 \\ p \, | \, m \Rightarrow x^{e^{-(k+1)}} < p \le x^{e^{-k}} }} \frac{Y |g(m)|^2}{m^2} \int_m^{m(1+1/Y)} \mathbb{E} \bigg[ \; \bigg| \sum_{\substack{ x/t < n \le x/m \\ P(n)\le x^{e^{-(k+1)}} }} \frac{X(n) g(n)}{\sqrt{n}} \bigg|^2 \, \bigg] \, dt  \Bigg)^q \\
	&\le \Bigg( \sum_{\substack{ m > 1\\ p \, | \, m \Rightarrow x^{e^{-(k+1)}} < p \le x^{e^{-k}} }} \frac{|g(m)|^2}{m} \sum_{\substack{ \frac{x}{m(1+1/Y)} < n \le \frac{x}{m} \\ P(n)\le x^{e^{-(k+1)}} }} \frac{|g(n)|^2}{n} \Bigg)^q \\
	&\ll \Bigg( \frac{1}{x} \sum_{\substack{m > 1 \\ p \, | \, m \Rightarrow x^{e^{-(k+1)}} < p \le x^{e^{-k}} }} |g(m)|^2 \sum_{\substack{ \frac{x}{m(1+1/Y)} < n \le \frac{x}{m} \\ P(n)\le x^{e^{-(k+1)}} }} |g(n)|^2 \Bigg)^q.
	\end{align*}
	In order to bound this, note first that for any $z \ge 2 B^2$ we have
	\begin{align}
	   \sum_{n \le z} \frac{|g(n)|^2}{n} \le \sum_{P(n)\le z} \frac{|g(n)|^2}{n} &\le \prod_{p \le 2 B^2} \left( 1 + \frac{A p^{2 \theta}}{p} + \frac{A p^{4 \theta}}{p^2} + \dots \right) \prod_{2 B^2 < p \le z} \left( 1 - \frac{B^2}{p} \right)^{-1}
	   \ll (\log z)^{B^2}. \label{TrivialBoundOnInt}
	\end{align}
	We will now make use of Lemma \ref{NairTenenbaum},
	invoking a hyperbola-type argument and subdividing the first sum into the range $1 < m \le \sqrt{x}$ and $m >  \sqrt{x}$. We then interchange the sum on the latter range, and thus have (assuming without loss of generality that $B \ge 1$)
	\begin{align*}
	&\frac{1}{x} \sum_{\substack{m > 1 \\ p \, | \, m \Rightarrow x^{e^{-(k+1)}} < p \le x^{e^{-k}} }} |g(m)|^2 \sum_{\substack{ \frac{x}{m(1+1/Y)} < n \le \frac{x}{m}\\ P(n) \le x^{e^{-(k+1)}}}} |g(n)|^2 \\
	\le &\frac{1}{x} \sum_{\substack{1 < m \le \sqrt{x} \\ p \, | \, m \Rightarrow x^{e^{-(k+1)}} < p \le x^{e^{-k}} }} |g(m)|^2 \sum_{\frac{x}{m(1+1/Y)} < n \le \frac{x}{m}} |g(n)|^2 
	\\ + &\frac{1}{x} \sum_{\substack{n \le\sqrt{x} \\ P(n) \le x^{e^{-(k+1)}} }} |g(n)|^2 \sum_{\substack{ \frac{x}{n(1+1/Y)} < m \le \frac{x}{n} \\ p \, | \, m \Rightarrow x^{e^{-(k+1)}} < p \le x^{e^{-k}} }} |g(m)|^2 \\
	\ll &  \Bigg(\frac{(\log x)^{B^2 - 1}}{Y}  + \frac{1}{Y \log x} \sum_{\substack{n \le \sqrt{x} \\ P(n) \le x^{e^{-(k+1)}} }}  \frac{|g(n)|^2}{n}\Bigg) \sum_{\substack{m > 1 \\ p \, | \, m \Rightarrow x^{e^{-(k+1)}} < p \le x^{e^{-k}} }} \frac{|g(m)|^2}{m} \\
	\ll &  \Bigg(\frac{(\log x)^{B^2 - 1}}{Y}  + \frac{1}{Y \log x} \prod_{p \le x^{e^{-(k+1)}}} \left( 1 + \frac{
	|g(p)|^2}{p} \right)\Bigg) \prod_{x^{e^{-(k+1)}} < p \le x^{e^{-k}}} \left( 1 + \frac{|g(p)|^2}{p}  \right) \\ 	
	\ll &\frac{(\log x)^{B^2-1}}{Y}.
	\end{align*} 
	Taking $q$-th powers and summing over $0 \le k \le K$, we arrive at a contribution
	\[ \ll K \exp \left( -(1+o(1)) q \sqrt{\log x } \right). \]

	Regarding the first term in (\ref{SmoothingError}), we can interchange sum and integral to arrive at
	\begin{equation}\label{tIntegral} \bigg \Vert \int_{x^{e^{-(k+1)}}}^\infty \bigg| \sum_{\substack{n > x/t \\ P(n) \le x^{e^{-(k+1)}} }} \frac{X(n) g(n)}{\sqrt{n}}  \bigg|^2 \sum_{\substack{t/(1+1/Y) < m \le t \\ p \, | \, m \Rightarrow x^{e^{-(k+1)}} < p \le x^{e^{-k}} }} \frac{Y |g(m)|^2 }{m^2} \, dt \bigg \Vert_q^q. \end{equation}
	
	Concluding our estimates so far, we have now proven that
	\begin{align*}
	\Psi_{2 q, g}(x) &\ll \sum_{0 \le k \le K} \bigg \Vert \int_{x^{e^{-(k+1)}}}^\infty \bigg| \sum_{\substack{n > x/t \\ P(n) \le x^{e^{-(k+1)}} }} \frac{X(n) g(n)}{\sqrt{n}}  \bigg|^2 \sum_{\substack{t/(1+1/Y) < m \le t \\ p \, | \, m \Rightarrow x^{e^{-(k+1)}} < p \le x^{e^{-k}} }} \frac{Y |g(m)|^2 }{m^2} \, dt \bigg \Vert_q^q \\ &+ \mathbb{E} \left[ |G(1/2)|^{2 q} \right] + K \exp \left( -(1+o(1)) q \sqrt{\log x} \right) + 1.
	\end{align*}
	
	For the inner sum in (\ref{tIntegral}), note that $|g(m)|^2 \le B^{2 \Omega(m)}$, and that $m > t/(1+1/Y)$ and $p \, | \, m \Rightarrow x^{e^{-(k+1)}} < p \le x^{e^{-k}}$ imply that $m$ has $\ge \frac{e^k \log (t/2)}{\log x}$ prime divisors (counted with multiplicity). Thus, Lemma \ref{HLTW} implies
	\begin{equation}\label{SumRestrictedg1}
	\begin{aligned}
	\sum_{\substack{t/(1+1/Y) < m \le t \\ p \, | \, m \Rightarrow x^{e^{-(k+1)}} < p \le x^{e^{-k}} }} \frac{Y |g(m)|^2 }{m^2} & \ll \frac{Y}{t^2} \sum_{\substack{t/(1+1/Y) < m \le t \\ p \, | \, m \Rightarrow x^{e^{-(k+1)}} < p \le x^{e^{-k}} }} B^{2 \Omega(m)} \\
	&\le \frac{Y}{t^2} e^{-\frac{e^k \log (t/2)}{\log x}} \sum_{\substack{t/(1+1/Y) < m \le t \\ p \, | \, m \Rightarrow x^{e^{-(k+1)}} < p \le x^{e^{-k}} }} (e B^2)^{\Omega(m)} \\
	&\ll \frac{e^k e^{- \frac{e^k \log (t/2)}{\log x}}}{t \log x}.
	\end{aligned}
		\end{equation}
		In particular, one has
	\begin{equation}\label{SumRestrictedg2}
	\sum_{\substack{t/(1+1/Y) < m \le t \\ p \, | \, m \Rightarrow x^{e^{-(k+1)}} < p \le x^{e^{-k}} }} \frac{Y |g(m)|^2 }{m^2}\ll \frac{1}{t \log t}.
	\end{equation}
	Subdividing the range of integration in (\ref{tIntegral}) into $t \le x$ and $t > x$,  and using \eqref{SumRestrictedg1} in the latter and \eqref{SumRestrictedg2} in the former range,   we thus upper-bound (\ref{tIntegral}) by
	
	\begin{align*}
	&\quad \; \bigg \Vert \int_{x^{e^{-(k+1)}}}^x \bigg| \sum_{\substack{n > x/t \\ P(n) \le  x^{e^{-(k+1)}} }} \frac{X(n) g(n)}{\sqrt{n}}  \bigg|^2 \, \frac{dt}{t \log t} \bigg \Vert_q^q \\
	&+ \frac{e^{k q}}{(\log x)^q} \bigg \Vert \sum_{\substack{n \ge 1 \\ P(n) \le x^{e^{-(k+1)}} }} \frac{X(n) g(n)}{\sqrt{n}}  \bigg \Vert_{2 q}^{2 q} \left( \int_x^\infty \frac{dt}{t^{1+\frac{e^k }{\log x}}} \right)^q \\
	&\ll \bigg \Vert \int_{x^{e^{-(k+1)}}}^x \bigg| \sum_{\substack{n > x/t \\ P(n) \le x^{e^{-(k+1)}}}} \frac{X(n) g(n)}{\sqrt{n}}  \bigg|^2 \, \frac{dt}{t \log t} \bigg \Vert_q^q
	+ e^{-e^k q} \mathbb{E}[|G_{k+1} (1/2)|^{2 q}].
	\end{align*}
	Substituting $z=x/t$, the first term equates to
	\begin{align*} &\bigg \Vert \int_1^{x^{1-e^{-(k+1)}}} \bigg| \sum_{\substack{n > z \\P(n) \le x^{e^{-(k+1)}}}} \frac{X(n) g(n)}{\sqrt{n}}  \bigg|^2 \, \frac{dz}{z \log(x/z)} \bigg \Vert_q^q \\
	\ll &\frac{1}{(\log x)^q} \bigg \Vert \int_1^{x^{1-e^{-(k+1)}}} \bigg| \sum_{\substack{n > z \\ P(n) \le x^{e^{-(k+1)}}}} \frac{X(n) g(n)}{\sqrt{n}}  \bigg|^2 \, \frac{dz}{z^{1-2 k / \log x}} \bigg \Vert_q^q,
	\end{align*}
	using that $\log(x/z) \gg z^{- 2 k / \log x} \log x$,  which follows  from the simple estimates $\log(x/z)\gg \log x$ if $z\leq \sqrt{x}$, and $\log(x/z)\gg e^{-k}\log x$ if for $\sqrt{x}\leq z\leq x^{1-e^{-(k+1)}}$. Putting everything together gives the claim.
\end{proof}

It should be noted that several of the steps in this argument do not require $|g(p)|$ to be bounded, which might be viewed as our strongest assumption on $g$. However, this appears to be crucial for our method in order to be able to apply Lemma \ref{NairTenenbaum}.

The next step of the argument is the application of Lemma \ref{PlancherelLemma} (Plancherel's Theorem for Dirichlet series), which proceeds in a similar fashion as in \cite{Ge}. We can not apply it directly to the first term in Proposition \ref{REP} because we would have $\sigma < 0$. Thus, we first need to apply partial summation and Cauchy-Schwarz to increase the exponent of $z$ to be slightly bigger than $1$, at the expense of making the inner sum larger to an extent that turns out not to matter. Moreover, partial summation allows us to switch back from an inner sum over $n > z$ to $n \le z$, which is what we need in order to apply Lemma \ref{PlancherelLemma}.

\begin{proposition}\label{REP2}
	Let $g, K$ and $q$ be as in Proposition \ref{REP}. Then uniformly over $0 \le k \le K$, we have
	\begin{align*}
	&\mathbb{E} \Bigg[ \bigg( \int_1^{x^{1-e^{-(k+1)}}} \bigg| \sum_{\substack{n > z \\ P(n) \le x^{e^{-(k+1)}}}} \frac{X(n) g(n)}{\sqrt{n}}  \bigg|^2 \, \frac{dz}{z^{1-2 k / \log x}} \bigg)^q \bigg]
	\ll \mathbb{E} \Bigg[ \bigg( \int_{\mathbb{R}} \frac{\left| G_{k+1} \left( \frac{1}{2} - \frac{2 (k+1)}{\log x} + it \right) \right|^2}{\left| \frac{2 (k+1)}{\log x} + i t \right|^2} \, dt \bigg)^q \Bigg].
	\end{align*}
\end{proposition}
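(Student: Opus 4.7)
The plan is to reduce to an application of Plancherel's theorem via a twist followed by a partial-summation/Cauchy--Schwarz manoeuvre. Set $\epsilon := 2(k+1)/\log x$, $\delta := 2\epsilon$, and $y := x^{e^{-(k+1)}}$. I introduce the weighted coefficients
\[ b_n := X(n) g(n) n^{\delta - 1/2} \mathbf{1}_{P(n) \le y}, \quad B(u) := \sum_{n \le u} b_n, \]
so that $\sum_n b_n/n^s = G_{k+1}(s + 1/2 - \delta)$. The inner sum on the left-hand side of the proposition is then $S(z) = \sum_{n > z} b_n/n^\delta$.

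Abel summation applied to $S(z) = \sum_{n > z} b_n/n^\delta$ (the boundary term at $N = \infty$ vanishes since $\delta > 0$ and $B(N)$ has at most polylogarithmic $L^2$-growth) produces
\[ S(z) = -\frac{B(z)}{z^\delta} + \delta \int_z^\infty \frac{B(u)}{u^{\delta+1}}\, du. \]
Cauchy--Schwarz on the integral with the factorisation $u^{-\delta-1} = u^{-1/2-\epsilon} \cdot u^{\epsilon - \delta - 1/2}$, together with the explicit evaluation $\int_z^\infty u^{2\epsilon - 2\delta - 1}\, du = z^{-2\epsilon}/(2\epsilon)$ (valid since $\delta - \epsilon = \epsilon > 0$), then gives the pointwise bound
\[ |S(z)|^2 \ll \frac{|B(z)|^2}{z^{2\delta}} + \epsilon \, z^{-2\epsilon} \int_z^\infty \frac{|B(u)|^2}{u^{1+2\epsilon}}\, du. \]

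Next I multiply by $z^{2k/\log x - 1}$ and integrate from $1$ to $X := x^{1 - e^{-(k+1)}}$. The boundary contribution has denominator exponent $1 + 2\delta - 2k/\log x > 1 + 2\epsilon$ in $z$, and since $B(z) = 0$ for $z < 1$ it is dominated by $\int_0^\infty |B(z)|^2/z^{1+2\epsilon}\, dz$. For the remaining double integral I swap the order of integration: the inner $z$-integral $\int_1^{\min(u,X)} z^{-(2k+4)/\log x - 1}\, dz$ is at most $\log x/(2k+4)$, which multiplied by the prefactor $\epsilon = 2(k+1)/\log x$ yields a bounded absolute constant times $\int_0^\infty |B(u)|^2/u^{1+2\epsilon}\, du$. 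Since every step above is pointwise in the randomness, Lemma \ref{PlancherelLemma} applied at $\sigma = \epsilon > 0$ gives
\[ \int_0^\infty \frac{|B(u)|^2}{u^{1+2\epsilon}}\, du = \frac{1}{2\pi} \int_{\mathbb{R}} \frac{|G_{k+1}(1/2 - \epsilon + it)|^2}{|\epsilon + it|^2}\, dt, \]
and raising to the $q$-th power and taking expectations closes the argument.

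The main technical obstacle will be to justify Abel summation up to $N = \infty$ together with Plancherel's theorem at the small value $\sigma = \epsilon$, despite the fact that the Dirichlet series defining $G_{k+1}$ need not be deterministically absolutely convergent at $\re(s) = 1/2 - \epsilon$. This should be handled by first truncating $S(z)$ at $n \le x^C$ for a sufficiently large absolute constant $C$, bounding the tail trivially via $|b_n| \le B^{\Omega(n)} n^{\delta - 1/2}$, and then applying the machinery above to the truncated sum, whose coefficient sequence has finite support and hence converges absolutely everywhere.
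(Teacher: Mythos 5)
Your proposal is correct and takes essentially the same route as the paper: weight the coefficients by $n^{\sigma}$, apply partial summation to produce a boundary term at $z$ plus an integral term, control the integral term by Cauchy--Schwarz (your $\delta = 2\epsilon$ matches the paper's choice $\sigma = 4(k+1)/\log x$ and your factorisation is the same split used there), swap the order of integration, and apply Lemma \ref{PlancherelLemma} at $\sigma = 2(k+1)/\log x$. The only cosmetic difference is that you let the upper limit of the partial sum go to $\infty$ directly (which is fine: the coefficient sequence $b_n$ is supported on $x^{e^{-(k+1)}}$-smooth integers and satisfies $|b_n| \ll n^{\theta+\delta-1/2}$ with $\theta+\delta<1/2$, so its Dirichlet series has negative abscissa of convergence and $B(N)N^{-\delta}\to 0$ deterministically), whereas the paper truncates at a finite $y$, discards the top boundary term via an $L^2$-bound, and lets $y\to\infty$; both treatments are valid.
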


\begin{proof}
	Firstly, we note that
	\begin{align}
	\mathbb{E} &\Bigg[ \bigg( \int_1^{x^{1-e^{-(k+1)}}} \bigg| \sum_{\substack{n > z \\ P(n) \le x^{e^{-(k+1)}}}} \frac{X(n) g(n)}{\sqrt{n}}  \bigg|^2 \, \frac{dz}{z^{1-2 k / \log x}} \bigg)^q \bigg] \nonumber \\
	= \lim_{y \to \infty} &\bigg \Vert \int_1^{x^{1-e^{-(k+1)}}} \bigg| \sum_{\substack{z < n \le y \\ P(n) \le x^{e^{-(k+1)}}}} \frac{X(n) g(n)}{\sqrt{n}}  \bigg|^2 \, \frac{dz}{z^{1-2 k / \log x}} \bigg \Vert_q^q. \label{limsup}
	\end{align}
	Partial summation applied to the inner sum implies that for any $y > z$ and $\sigma > 0$ we have
	\begin{align}
	&\bigg| \sum_{\substack{z < n \le y \\ P(n) \le x^{e^{-(k+1)}}}} \frac{X(n) g(n)}{\sqrt{n}}  \bigg|^2 =  \bigg| \sum_{\substack{z < n \le y \\ P(n) \le x^{e^{-(k+1)}}}} n^{-\sigma} \frac{X(n) g(n)}{n^{1/2-\sigma}}  \bigg|^2 \nonumber \\
	\le &\bigg| y^{-\sigma} \sum_{\substack{n \le y \\ P(n) \le x^{e^{-(k+1)}}}} \frac{X(n) g(n)}{n^{1/2-\sigma}} \bigg|^2 + \bigg| z^{-\sigma} \sum_{\substack{n \le z \\ P(n) \le x^{e^{-(k+1)}}}} \frac{X(n) g(n)}{n^{1/2-\sigma}} \bigg|^2 \nonumber \\
	+ &\bigg| \sigma \int_z^y \sum_{\substack{n \le u \\ P(n) \le x^{e^{-(k+1)}}}} \frac{X(n) g(n)}{n^{1/2-\sigma}} \, \frac{du}{u^{1+\sigma}} \bigg|^2. \label{PartialSum}
	\end{align}
	Plugging the first term of (\ref{PartialSum}) into (\ref{limsup}), using that the inner sum does not depend on $z$ and trivially bounding the arising $2 q$-norm by the $2$-norm, gives a contribution
	\begin{align*}
	\le \Big( \int_1^{x^{1-e^{-(k+1)}}} \, \frac{dz}{z^{1-2k/\log x}} \Big)^q \limsup_{y \to \infty} \bigg( y^{-2 \sigma} \sum_{\substack{n \le y \\ P(n) \le x^{e^{-(k+1)}} }} \frac{|g(n)|^2}{n^{1-2 \sigma}} \bigg)^q.
	\end{align*}
	But
	\[ \sum_{\substack{n \le y \\ P(n) \le x^{e^{-(k+1)}} }} \frac{|g(n)|^2}{n^{1-2 \sigma}} \le \sum_{P(n) \le x^{e^{-(k+1)}}} \frac{|g(n)|^2}{n^{1-2 \sigma}} \]
	is bounded independently of $y$ (since $0 < \sigma < \frac{1}{8}$ will not depend on $y$), hence the contribution vanishes in the limit.
	
	If we plug in the third term of (\ref{PartialSum}) into (\ref{limsup}) with $y$ fixed for now, we arrive at a contribution 
	\[ \bigg \Vert \sigma^2 \int_1^{x^{1-e^{-(k+1)}}} \bigg| \int_z^y \sum_{\substack{n \le u \\ P(n) \le x^{e^{-(k+1)}}}} \frac{X(n) g(n)}{n^{1/2-\sigma}} \, \frac{du}{u^{1+\sigma}} \bigg|^2 \, \frac{dz}{z^{1-2 k / \log x}} \bigg \Vert_q^q. \]
	Applying Cauchy-Schwarz to the inner integral and then extending the arising (non-negative) integrals to $\infty$, we see that the last expression is
	\begin{align*} &\le \bigg \Vert \sigma^2 \int_1^{x^{1-e^{-(k+1)}}} \bigg( \int_z^y \Big| \sum_{\substack{n \le u \\ P(n) \le x^{e^{-(k+1)}}}} \frac{X(n)g(n)}{n^{1/2-\sigma}} \Big|^2 \, \frac{du}{u^{1+\sigma}} \bigg) \bigg( \int_z^y \, \frac{du}{u^{1+\sigma}} \bigg) \, \frac{dz}{z^{1-2 k / \log x}} \bigg \Vert_q^q \\
	&\le \bigg \Vert \sigma \int_1^{x^{1-e^{-(k+1)}}}  \int_z^\infty \Big| \sum_{\substack{n \le u \\ P(n) \le x^{e^{-(k+1)}}}} \frac{X(n)g(n)}{n^{1/2-\sigma}} \Big|^2 \, \frac{du}{u^{1+\sigma}} \, \frac{dz}{z^{1-2 k / \log x + \sigma}} \bigg \Vert_q^q.
	\end{align*}
	Note that the last expression is independent of $y$, so we may take the limit. Lastly, interchanging the two integrals and taking $\sigma = \frac{4 (k+1)}{\log x}$, we see that this is
	\begin{equation}\label{BeforePl} \ll \bigg \Vert \int_1^\infty \Big| \sum_{\substack{n \le u \\ P(n) \le x^{e^{-(k+1)}}}} \frac{X(n)g(n)}{n^{1/2-\sigma}} \Big|^2 \, \frac{du}{u^{1+\sigma}} \bigg \Vert_q^q. \end{equation}
	
	The second term of (\ref{PartialSum}), which is independent of $y$, gives a contribution in (\ref{limsup}) of
	\[ \bigg \Vert \int_1^{x^{1-e^{-(k+1)}}} \Big| \sum_{\substack{n \le z \\ P(n) \le x^{e^{-(k+1)}}}} \frac{X(n)g(n)}{n^{1/2-\sigma}} \Big|^2 \, \frac{dz}{z^{1- 2 k / \log x + 2 \sigma}} \bigg \Vert_q^q \]
	and is thus absorbed into (\ref{BeforePl}). But now we can finally apply Lemma \ref{PlancherelLemma} to (\ref{BeforePl}), and we obtain that it is
	\[ \ll \mathbb{E} \Bigg[ \bigg( \int_{\mathbb{R}} \frac{\left| G_k \left( \frac{1}{2} - \frac{\sigma}{2} + it \right) \right|^2}{\left| \frac{\sigma}{2} + i t \right|^2} \, dt \bigg)^q \Bigg],  \]
	from which the claim follows.
	
\end{proof}


\section{Upper bounds for low pseudomoments: Proof of Theorem \ref{Main}}

 Let $x$ be a sufficiently large real number and put $K:=[\log\log\log x].$ To shorten our notation, we let 
$$ H_{k, \sigma}(t):= G_k\left(\frac{1}{2}+\sigma+it\right),$$
where $k$ is a non-negative integer and $G_k$ is defined in \eqref{DefGk}.

In this section, we shall complete the proof of Theorem \ref{Main}. To this end, we will bound the $q$-th moment of $\int_T^{2T} |H_{k, \sigma}(t)|^2 dt$ for different ranges of $T$ with respect to $k$ and $x$.
\begin{proposition}\label{SmallT}
Let $0<q\le 1/2$ and $0\leq k\leq K+1$. Assume further that $0<T\leq e^k/\log x$ and that $\sigma\geq -2(k+1)/\log x$. Then we have 
$$ \ex \left[ \left(\int_T^{2T} |H_{k, \sigma}(t)|^2 dt \right)^q\right] \ll e^{-kq^2 \alpha} T^q (\log x)^{q^2 \alpha}.
$$
\end{proposition}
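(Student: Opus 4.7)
The plan is to exploit that the interval $[T, 2T]$ has length $T \le e^k/\log x = 1/\log(x^{e^{-k}})$, which is at most the natural oscillation scale of the truncated Euler product $G_k$. On this scale $|H_{k,\sigma}(t)|^2$ is nearly constant in $t$, so that $\int_T^{2T}|H_{k,\sigma}(t)|^2\,dt$ should be of order $T|H_{k,\sigma}(T)|^2$, and the desired estimate will reduce to $T^q$ times the pointwise $2q$-th moment of $G_k(\tfrac12+\sigma)$, which can be computed via Proposition \ref{MixedRandomMoments}.

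I would start from the crude pointwise bound
$$\int_T^{2T}|H_{k,\sigma}(t)|^2\,dt \;\le\; T\cdot \sup_{t\in[T,2T]}|H_{k,\sigma}(t)|^2,$$
which after raising to the $q$-th power and taking expectations reduces the problem to showing $\ex\bigl[\sup_{t\in[T,2T]}|H_{k,\sigma}(t)|^{2q}\bigr] \ll \ex\bigl[|G_k(\tfrac12+\sigma)|^{2q}\bigr]$. To handle the supremum, I would discretize $[T,2T]$ into $N=O(1)$ equispaced nodes $t_1,\dots,t_N$ at spacing $\asymp e^k/\log x$; by subadditivity of $x\mapsto x^q$ one has $\max_j|H_{k,\sigma}(t_j)|^{2q}\le \sum_j|H_{k,\sigma}(t_j)|^{2q}$, and Steinhaus rotation invariance of $X$ gives $\ex[|H_{k,\sigma}(t_j)|^{2q}]=\ex[|G_k(\tfrac12+\sigma)|^{2q}]$ for every $j$, so the discrete sum is of the right size $O(\ex[|G_k(\tfrac12+\sigma)|^{2q}])$.

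Finally I would compute $\ex[|G_k(\tfrac12+\sigma)|^{2q}]$ using Proposition \ref{MixedRandomMoments} with $a=q$, $b=0$, $y=2$, $z=x^{e^{-k}}$: the exponent is $q^2\sum_{p\le x^{e^{-k}}}|g(p)|^2/p^{1+2\sigma}+O(1)$. For $\sigma\ge -2(k+1)/\log x$ and $\log p\le e^{-k}\log x$ we have $|2\sigma\log p|\le 4(k+1)e^{-k}=O(1)$, so $p^{-2\sigma}=1+O(|\sigma|\log p)$; using $|g(p)|\le B$, one sees $|\sigma|\sum_{p\le x^{e^{-k}}}|g(p)|^2(\log p)/p \ll (k+1)e^{-k}=O(1)$, hence the sum equals $\alpha(\log\log x-k)+O(1)$ by \eqref{SelbergOrtho}. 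This gives $\ex[|G_k(\tfrac12+\sigma)|^{2q}]\ll e^{-kq^2\alpha}(\log x)^{q^2\alpha}$, and combining with the previous step completes the proof.

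The main obstacle I anticipate is the passage from the discrete max to the true supremum: one has to show that the discretization error $\sup_{t\in[T,2T]}|H_{k,\sigma}(t)|^{2q}-\max_j|H_{k,\sigma}(t_j)|^{2q}$ contributes at most $O(\ex[|G_k(\tfrac12+\sigma)|^{2q}])$ in expectation. A natural way to handle this is to invoke \eqref{MixedRandomMoments2} (with $a=b=q$, $|t-t_j|\ll 1/\log(x^{e^{-k}})$, so that $\cos((t-t_j)\log p)$ is close to $1$ on the relevant primes) to bound the joint moments $\ex[|H_{k,\sigma}(t)|^{2q}|H_{k,\sigma}(t_j)|^{2q}]$ uniformly, thereby controlling the fluctuations of the random process on the short interval.
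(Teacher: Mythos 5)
Your plan diverges from the paper at the very first step, and the divergence opens a genuine gap. The paper never introduces a supremum. Instead it applies H\"older's inequality with conjugate exponents $\tfrac{1}{1-q}$ and $\tfrac{1}{q}$ after factoring the integrand as
\[
\left(\int_T^{2T}|H_{k,\sigma}(t)|^2\,dt\right)^q
= |H_{k,\sigma}(0)|^{2q(1-q)}\left(\int_T^{2T}\frac{|H_{k,\sigma}(t)|^2}{|H_{k,\sigma}(0)|^{2(1-q)}}\,dt\right)^q,
\]
so that one only ever needs (i) the single moment $\ex[|H_{k,\sigma}(0)|^{2q}]$ and (ii) the mixed first moment $\ex\bigl[|H_{k,\sigma}(t)|^2|H_{k,\sigma}(0)|^{-2(1-q)}\bigr]$, the latter handled uniformly for $|t|\ll 1/\log z$ by \eqref{MixedRandomMoments2} with $a+b=q$. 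There is no path-supremum anywhere, and the exponents line up so that the two factors combine to give exactly $T^q(e^{-k}\log x)^{q^2\alpha}$.

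By contrast your bound $\int_T^{2T}|H_{k,\sigma}(t)|^2\,dt\le T\sup_{t\in[T,2T]}|H_{k,\sigma}(t)|^2$ converts the problem into estimating $\ex\bigl[\sup_{t\in[T,2T]}|H_{k,\sigma}(t)|^{2q}\bigr]$. Your discretization and rotation-invariance handle the discrete maximum over $O(1)$ nodes correctly, and your final computation of $\ex[|G_k(\tfrac12+\sigma)|^{2q}]$ is right, but the step you flag yourself --- the passage from the discrete maximum to the true supremum --- is not closed by the idea you propose. Bounding the joint moments $\ex[|H_{k,\sigma}(t)|^{2q}|H_{k,\sigma}(t_j)|^{2q}]$ via \eqref{MixedRandomMoments2} only controls correlations at pairs of points; it says nothing about the modulus of continuity of the random path $t\mapsto|H_{k,\sigma}(t)|^{2q}$, which is what a supremum bound actually requires (note also that for $a=b=q$ the bound you would get is $\ll(\log z)^{4q^2\alpha}$, which is the fourth moment, not the square of the second). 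A rigorous version of your route would need a chaining argument, a maximal inequality for the log-process, or a derivative/Sobolev bound on $\partial_t|H_{k,\sigma}(t)|^2$ --- all considerably more work than the H\"older trick, which sidesteps the supremum altogether by exploiting that the full interval $[T,2T]$ lies inside the range $|t|\ll 1/\log z$ where \eqref{MixedRandomMoments2} applies pointwise.
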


\begin{proof}
By H\"older's inequality, we have 
\begin{equation}\label{HolderSmallT}
\begin{aligned}
 \ex \left[ \left(\int_T^{2T} |H_{k, \sigma}(t)|^2 dt \right)^q\right] 
 & = \ex  \left[ |H_{k, \sigma}(0)|^{2q(1-q)} \left(\int_T^{2T} |H_{k, \sigma}(t)|^2 |H_{k, \sigma}(0)|^{-2(1-q)}dt \right)^q\right]\\
 & \leq \ex  \left[ |H_{k, \sigma}(0)|^{2q}\right]^{1-q} \ex\left[\int_T^{2T} |H_{k, \sigma}(t)|^2 |H_{k, \sigma}(0)|^{-2(1-q)}dt \right]^q. 
\end{aligned}
\end{equation}
Now it follows from Proposition \ref{MixedRandomMoments} that 
\begin{equation}\label{UpperH0}
\ex  \left[ |H_{k, \sigma}(0)|^{2q}\right] \ll \exp\left( q^2 \sum_{p\leq x^{e^{-k}}} \frac{|g(p)|^2}{p}\right) \ll (e^{-k}\log x)^{q^2 \alpha},
\end{equation}
by \eqref{SelbergOrtho}. Similarly, by Proposition \ref{MixedRandomMoments} together with the fact that $T\leq e^{k}/\log x$ we obtain 
$$ \ex\left[\int_T^{2T} |H_{k, \sigma}(t)|^2 |H_{k, \sigma}(0)|^{-2(1-q)}dt \right]= \int_T^{2T} \ex\left[ |H_{k, \sigma}(t)|^2 |H_{k, \sigma}(0)|^{-2(1-q)} \right] dt \ll T (e^{-k}\log x)^{q^2 \alpha}.$$
Inserting these estimates in \eqref{HolderSmallT} completes the proof.

\end{proof}

\begin{proposition}\label{MediumT}
Let $0<q\leq 1/2$ and $0\leq k\leq K+1$. Assume further that $e^k/\log x \leq T \leq 1$ and that $\sigma\geq -2(k+1)/\log x$. Then we have 
$$ \ex \left[ \left(\int_T^{2T} |H_{k, \sigma}(t)|^2 dt \right)^q\right] \ll  e^{-kq \alpha} (\log x)^{q \alpha} T^{-q^2 \alpha+q+\alpha q}.
$$

\end{proposition}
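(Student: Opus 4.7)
The strategy parallels Proposition \ref{SmallT} but requires a scale decomposition of $G_k$, since for $T \ge e^k/\log x$ the full product $H_{k,\sigma}$ no longer stays correlated across the interval $[T, 2T]$. I would split $G_k(s) = F_1(s) F_2(s)$ with $F_1$ restricted to primes $p \le y$ and $F_2$ to primes $y < p \le x^{e^{-k}}$, choosing $y := e^{1/T}$. The hypotheses $e^k/\log x \le T \le 1$ precisely ensure $e \le y \le x^{e^{-k}}$, so both factors are nontrivial, and since they involve disjoint $X(p)$'s they are independent. The point of this choice is that the decorrelation scale $1/\log y = T$ of $F_1$ matches the length of $[T, 2T]$, so $F_1$ is essentially ``constant'' on the interval while $F_2$ fully decorrelates across it.

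The argument then runs as follows. Starting from
\[ \int_T^{2T} |H_{k,\sigma}(t)|^2 \, dt \le \Big(\sup_{s \in [T,2T]} |F_1(\tfrac{1}{2}+\sigma+is)|^2\Big) \int_T^{2T} |F_2(\tfrac{1}{2}+\sigma+it)|^2 \, dt, \]
raising to the $q$th power, taking expectations, and using the independence of $F_1$ and $F_2$, the problem factors into a product of $\mathbb{E}[\sup_{[T,2T]}|F_1|^{2q}]$ and $\mathbb{E}[(\int_T^{2T}|F_2|^2\,dt)^q]$. For the $F_2$ piece, Jensen's inequality (valid since $q \le 1$) combined with \eqref{MixedRandomMoments1} (applied with $a=0$, $b=1$) and \eqref{SelbergOrtho} yields
\[ \mathbb{E}\Big[\Big(\int_T^{2T} |F_2|^2 \, dt\Big)^q\Big] \le T^q \big(\mathbb{E}[|F_2(\tfrac{1}{2}+\sigma)|^2]\big)^q \ll T^q \Big(\frac{e^{-k}\log x}{\log y}\Big)^{q\alpha}. \]
For the $F_1$ piece, the key claim is $\mathbb{E}[\sup_{s\in[T,2T]}|F_1(\tfrac{1}{2}+\sigma+is)|^{2q}] \ll (\log y)^{q^2\alpha}$, matching the pointwise $2q$-th moment bound from Proposition \ref{MixedRandomMoments}. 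Substituting $\log y = 1/T$ and multiplying gives
\[ T^{-q^2\alpha} \cdot T^q (T e^{-k}\log x)^{q\alpha} = e^{-kq\alpha}(\log x)^{q\alpha}\, T^{-q^2\alpha + q + \alpha q}, \]
as required. The gain over naive Jensen applied directly to the whole integrand (which would give only $T^q(e^{-k}\log x)^{q\alpha}$) comes precisely from replacing the $L^2$-moment of $F_1$ with its smaller $2q$-th moment.

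The main obstacle is the supremum bound on $F_1$. Since $T = 1/\log y$ exactly matches $F_1$'s correlation scale, the supremum over $[T,2T]$ should in expectation be comparable to the pointwise moment, but this needs justification. One natural route is a Sobolev-type inequality
\[ \sup_{t \in [T,2T]} |F_1(t)|^{2q} \le \frac{1}{T}\int_T^{2T}|F_1(t)|^{2q} \, dt + 2\int_T^{2T}\Big|\frac{d}{dt}|F_1(t)|^{2q}\Big| \, dt, \]
whose first term has expectation $\ll (\log y)^{q^2\alpha}$ directly from Proposition \ref{MixedRandomMoments}, and whose second term, via $|\frac{d}{dt}|F_1|^{2q}| \le 2q |F_1|^{2q-1}|F_1'|$, can be controlled by Cauchy--Schwarz combined with moment bounds for $|F_1|^{4q-2}$ and $|F_1'|^2$; both are accessible through variants of Proposition \ref{MixedRandomMoments} since $F_1'$ equals $F_1$ times a logarithmic derivative that is a Dirichlet polynomial of similar shape. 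Alternatively, one can discretize $[T,2T]$ into $O(1)$ points and use \eqref{MixedRandomMoments2} with $|t_1 - t_2| \ll 1/\log y$ to control the pairwise variation, then apply a union bound.
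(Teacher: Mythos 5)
Your setup---factoring $H_{k,\sigma} = F_1 F_2$ at the scale $y = e^{1/T}$---is exactly the decomposition the paper uses (there written $H^s_\sigma$ and $H^\ell_{k,\sigma}$), and your exponent bookkeeping is correct. The genuine difference lies in how the $2q$-th moment gain on $F_1$ is extracted, and this is where there is a gap. You rely on the unproved claim
\[
\ex\Big[\sup_{t\in[T,2T]}\big|F_1\big(\tfrac12+\sigma+it\big)\big|^{2q}\Big] \ll (\log y)^{q^2\alpha} = T^{-q^2\alpha},
\]
and the specific route you sketch does not close. In the Sobolev argument the derivative term is $2q\int_T^{2T}|F_1|^{2q-1}|F_1'|\,dt$, and your Cauchy--Schwarz pairing gives $\ex[|F_1|^{2q-1}|F_1'|] \le \ex[|F_1|^{4q-2}]^{1/2}\,\ex[|F_1'|^2]^{1/2}$. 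By Proposition \ref{MixedRandomMoments} one has $\ex[|F_1|^{4q-2}]\ll T^{-(2q-1)^2\alpha}$, and since $F_1'=F_1\cdot(F_1'/F_1)$ one can compute $\ex[|F_1'|^2]\ll(\log y)^2\,\ex[|F_1|^2]\ll T^{-2-\alpha}$, so that after integrating over $[T,2T]$ the derivative term contributes $T^{-\frac{\alpha}{2}((2q-1)^2+1)}$. For this to be $\ll T^{-q^2\alpha}$ one would need $(2q-1)^2+1\le 2q^2$, i.e.\ $(q-1)^2\le 0$, which fails for every $q<1$: the pairing $|F_1|^{2q-1}\cdot|F_1'|$ produces $\ex[|F_1|^{2(2q-1)}]$, and its exponent $(2q-1)^2\alpha$ is too large. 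A pairing $|F_1|^{2q}|F_1'/F_1| = |F_1|^q\cdot|F_1|^q|F_1'/F_1|$ does recover the right exponent, but then requires the mixed moment $\ex[|F_1|^{2q}|F_1'/F_1|^2]$, which carries a logarithmic-derivative weight and is not an instance of Proposition \ref{MixedRandomMoments}; the discretization alternative you mention still needs such derivative control to pass from a finite net to the true supremum.

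The paper avoids all of this by never forming a supremum: Proposition \ref{MediumT} is proved by the same H\"older device as Proposition \ref{SmallT}, writing
\[
\ex\Big[\Big(\int_T^{2T}|H_{k,\sigma}(t)|^2\,dt\Big)^q\Big] \le \ex\big[|H^s_\sigma(0)|^{2q}\big]^{1-q}\,\ex\Big[\int_T^{2T}|H_{k,\sigma}(t)|^2\,|H^s_\sigma(0)|^{-2(1-q)}\,dt\Big]^q,
\]
and then factoring the inner expectation by independence into $\ex[|H^s_\sigma(t)|^2|H^s_\sigma(0)|^{-2(1-q)}]\cdot\ex[|H^\ell_{k,\sigma}(t)|^2]$, each of which is controlled directly by Proposition \ref{MixedRandomMoments} (the first via \eqref{MixedRandomMoments2} with $a=1$, $b=-(1-q)$ and $|t|\le 2T\ll 1/\log y$). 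In your framework the cleanest repair is to condition on $F_1$ and apply Jensen in the $F_2$-randomness (as you already do for the $F_2$ piece), replacing $|F_2(\cdot+it)|^2$ by its $t$-independent expectation $c\ll(Te^{-k}\log x)^\alpha$; what remains is $c^q\,\ex[(\int_T^{2T}|F_1|^2\,dt)^q]$, and since $F_1$ involves only primes up to $e^{1/T}$, this is precisely a ``small-$T$'' integral for $F_1$, so the H\"older step of Proposition \ref{SmallT} gives $\ex[(\int_T^{2T}|F_1|^2)^q]\ll T^{q-q^2\alpha}$. Multiplying by $c^q$ recovers the stated bound.
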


\begin{proof}
For this range of $T$ we shall split the Euler product $H_{k, \sigma}(t)$ into a product over ``small primes'' ($p\leq \exp(1/T)$) and ``large primes'' ($p> \exp(1/T)$). More specifically we write
$$ H_{k, \sigma}(t)= H^s_{\sigma}(t) H^{\ell}_{k, \sigma}(t), $$
where 
$$
H^s_{\sigma}(t)= \prod_{p\leq e^{1/T}} \left( 1 + \frac{X(p) g(p)}{p^s} + \frac{X(p)^2 g(p^2)}{p^{2 s}} + \dots \right),
$$
and 
$$
H^{\ell}_{k, \sigma}(t)= \prod_{e^{1/T} <p \le x^{e^{-k}}} \left( 1 + \frac{X(p) g(p)}{p^s} + \frac{X(p)^2 g(p^2) }{p^{2 s}} + \dots \right).
$$
Then, similarly to \eqref{HolderSmallT} we have 
\begin{equation}\label{HolderMediumT}
\ex \left[ \left(\int_T^{2T} |H_{k, \sigma}(t)|^2 dt \right)^q\right] 
\leq \ex  \left[ |H^s_{\sigma}(0)|^{2q}\right]^{1-q} \ex\left[\int_T^{2T} |H_{k, \sigma}(t)|^2 |H^s_{ \sigma}(0)|^{-2(1-q)}dt \right]^q,
\end{equation}
and using Proposition \ref{MixedRandomMoments}, we obtain
\begin{equation}\label{UpperSH0}
\ex  \left[ |H^s_{\sigma}(0)|^{2q}\right] \ll \exp\left( q^2 \sum_{p\leq e^{1/T}} \frac{|g(p)|^2}{p}\right) \ll T^{-q^2 \alpha}.
\end{equation}
Now, to bound the second term, we interchange the expectation with the integral, and use the independence of the Euler products over the small and the large primes to get 
$$ \ex\left[\int_T^{2T} |H_{k, \sigma}(t)|^2 |H^s_{ \sigma}(0)|^{-2(1-q)}dt \right]
= \int_{T}^{2T} \ex \left[|H^s_{ \sigma}(t)|^2 |H^s_{ \sigma}(0)|^{-2(1-q)}\right] 
\ex \left[|H^{\ell}_{k, \sigma}(t)|^2\right] dt.$$
By Proposition \ref{MixedRandomMoments}, we have uniformly for $t\in [T, 2T]$ 
$$ \ex \left[|H^s_{ \sigma}(t)|^2 |H^s_{ \sigma}(0)|^{-2(1-q)}\right] \ll T^{-q^2\alpha}. $$ 
Thus, we deduce that 
\begin{equation}\label{UpperLargeH} \ex\left[\int_T^{2T} |H_{k, \sigma}(t)|^2 |H^s_{ \sigma}(0)|^{-2(1-q)}dt \right]
\ll T^{-q^2\alpha} \int_{T}^{2T}
\ex \left[|H^{\ell}_{k, \sigma}(t)|^2\right] dt. 
\end{equation}
We shall now split the range of integration into intervals of length $e^{k}/\log x.$ This gives 
\begin{equation}\label{SplitIntegralLargeT0}
\begin{aligned}
\int_T^{2T}\ex \left[|H^{\ell}_{k, \sigma}(t)|^2\right] dt 
& \leq \sum_{e^{-k} T\log x <n \leq 2e^{-k} T\log x } \int_{|t|\leq e^{k}/(2\log x)} \ex \left[\left |H^{\ell}_{k, \sigma}\left(t+\frac{e^k n}{\log x}\right)\right|^2\right] dt \\
& \ll e^{-k} T\log x \int_{|t|\leq e^{k}/(2\log x)} \ex \left[\left |H^{\ell}_{k, \sigma}\left(t\right)\right|^2\right] dt,
\end{aligned}
\end{equation}
by translation-invariance in law. Moreover, it follows from \eqref{SelbergOrtho} and Proposition \ref{MixedRandomMoments} that uniformly for $t \in [-e^{k}/(2\log x), e^{k}/(2\log x)]$ we have
$$ 
\ex \left[\left |H_{k, \sigma}\left(t\right)\right|^2\right] dt \ll \exp\left( \sum_{e^{1/T}\leq p\leq x^{e^{-k}}} \frac{|g(p)|^2}{p}\right) \ll (Te^{-k}\log x)^{\alpha}.
$$
Combining this bound with \eqref{HolderMediumT}, \eqref{UpperSH0}, \eqref{UpperLargeH}, and \eqref{SplitIntegralLargeT0} completes the proof.
\end{proof}

\begin{proposition}\label{LargeT}
Let $0<q\le 1/2$ and $0\leq k\leq K+1$. Assume further that $T \geq 1$ and that $\sigma\geq -2(k+1)/\log x$. Then we have 
$$ \ex \left[ \left(\int_T^{2T} |H_{k, \sigma}(t)|^2 dt \right)^q\right] \ll e^{-kq \alpha} T^q (\log x)^{q \alpha}.
$$
\end{proposition}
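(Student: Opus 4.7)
The plan is to observe that the regime $T \ge 1$ is much simpler than the small- and medium-$T$ cases treated in Propositions \ref{SmallT} and \ref{MediumT}, because the target exponent $q\alpha$ on $\log x$ is already larger than the ``correct'' $q^2\alpha$ for $q < 1$, so no H\"older splitting against $H_{k,\sigma}(0)$ is needed. The strategy is just: pull the $q$-th power outside by Jensen's inequality, swap expectation and integral by Fubini, and then apply Proposition \ref{MixedRandomMoments} directly to the second moment of $H_{k,\sigma}(t)$.

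Concretely, since $0 < q \le 1/2 \le 1$ the map $u \mapsto u^q$ is concave on $[0,\infty)$, so Jensen's inequality and Fubini give
\[
\ex\left[\left(\int_T^{2T}|H_{k,\sigma}(t)|^2\,dt\right)^q\right]
\le \left(\int_T^{2T}\ex\bigl[|H_{k,\sigma}(t)|^2\bigr]\,dt\right)^q.
\]
Then I would invoke \eqref{MixedRandomMoments1} of Proposition \ref{MixedRandomMoments} with $a=1$, $b=0$ (so the $2ab\cos(t\log p)$ contribution vanishes and the bound is uniform in $t$), $y=2$ and $z=x^{e^{-k}}$, obtaining
\[
\ex\bigl[|H_{k,\sigma}(t)|^2\bigr] \ll \exp\!\left(\sum_{p\le x^{e^{-k}}}\frac{|g(p)|^2}{p^{1+2\sigma}}\right).
\]

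The one nontrivial point is to check that shifting $\sigma$ down to $-2(k+1)/\log x$ does not inflate this sum. For any $p\le x^{e^{-k}}$, $|\sigma|\log p \le 2(k+1)e^{-k}=O(1)$, so a Taylor expansion yields $p^{-2\sigma}=1+O(|\sigma|\log p)$. Combined with $|g(p)|^2\le B^2$ from \eqref{gBoundCond} and Mertens' estimate $\sum_{p\le y}(\log p)/p=\log y+O(1)$, this gives
\[
\sum_{p\le x^{e^{-k}}}\frac{|g(p)|^2}{p^{1+2\sigma}}
= \sum_{p\le x^{e^{-k}}}\frac{|g(p)|^2}{p}+O\!\left(|\sigma|\,e^{-k}\log x\right)
= \alpha\log\log(x^{e^{-k}})+O(1),
\]
using \eqref{SelbergOrtho}. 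Hence $\ex[|H_{k,\sigma}(t)|^2]\ll (e^{-k}\log x)^{\alpha}$ uniformly in $t\in[T,2T]$, and substituting this into the Jensen bound yields
\[
\ex\!\left[\left(\int_T^{2T}|H_{k,\sigma}(t)|^2\,dt\right)^q\right]
\ll \bigl(T\,(e^{-k}\log x)^{\alpha}\bigr)^q = e^{-kq\alpha}\,T^q(\log x)^{q\alpha},
\]
which is exactly the claim. The only delicate step is the $\sigma$-bookkeeping above; everything else is a one-line invocation of Jensen's inequality and the random Euler product moment bound from Proposition \ref{MixedRandomMoments}.
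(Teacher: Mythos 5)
Your proof is correct, and its backbone (Jensen/concavity to pass to the first moment, Fubini, then Proposition~\ref{MixedRandomMoments}) matches the paper's. The execution of the second-moment bound differs slightly: the paper chops $[T,2T]$ into subintervals of length $e^k/\log x$, uses translation-invariance in law of $X$ to reduce to $|t|\le e^k/(2\log x)$, and then invokes the pre-packaged estimate \eqref{MixedRandomMoments2} (which has already absorbed the $\sigma$-shift); you instead apply \eqref{MixedRandomMoments1} directly and do the $\sigma$-bookkeeping yourself via $p^{-2\sigma}=1+O(|\sigma|\log p)$ together with Mertens and the bound $|g(p)|\le B$. Both routes work, and yours is arguably a bit cleaner because it observes that the $t$-dependence is purely cosmetic; the paper's splitting is overkill here (it is more naturally motivated in the medium-$T$ case, Proposition~\ref{MediumT}). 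Two small remarks: (i) to control $\mathbb{E}[|H_{k,\sigma}(t)|^2]$ you should take $a=0,\,b=1$, not $a=1,\,b=0$ — the latter gives $\mathbb{E}[|H_{k,\sigma}(0)|^2]$; the two agree by translation-invariance, but you should either switch the roles or state that invariance explicitly. (ii) Your estimate $|\sigma|\log p\le 2(k+1)e^{-k}=O(1)$ uses the lower bound $\sigma\ge -2(k+1)/\log x$ and so only handles $\sigma$ close to that endpoint; for large positive $\sigma$ the claim $p^{-2\sigma}=1+O(|\sigma|\log p)$ is false, but in that regime $\sum_p|g(p)|^2/p^{1+2\sigma}\le\sum_p|g(p)|^2/p$ trivially, so the conclusion still holds — worth a sentence for completeness. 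You should also check that $\sigma\ge -2(k+1)/\log x$ is compatible with the hypothesis $\sigma\ge-10/\log z$ of Proposition~\ref{MixedRandomMoments} with $z=x^{e^{-k}}$, i.e.\ $2(k+1)\le 10e^{k}$, which indeed holds for all $k\ge 0$.
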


\begin{proof}
We first use H\"older's inequality which implies that
\begin{equation}\label{HolderLargeT}\ex \left[ \left(\int_T^{2T} |H_{k, \sigma}(t)|^2 dt \right)^q\right] \leq \ex \left[ \left(\int_T^{2T} |H_{k, \sigma}(t)|^2 dt \right)\right]^q = \left(\int_T^{2T}\ex \left[|H_{k, \sigma}(t)|^2 \right] dt\right)^q.
\end{equation}
To estimate the inner integral, we split the range of integration into intervals of length $e^{k}/\log x.$ This gives 
\begin{equation}\label{SplitIntegralLargeT}
\begin{aligned}
\int_T^{2T}\ex \left[|H_{k, \sigma}(t)|^2\right] dt 
& \leq \sum_{e^{-k} T\log x <n \leq 2e^{-k} T\log x } \int_{|t|\leq e^{k}/(2\log x)} \ex \left[\left |H_{k, \sigma}\left(t+\frac{e^k n}{\log x}\right)\right|^2\right] dt \\
& \ll e^{-k} T\log x \int_{|t|\leq e^{k}/(2\log x)} \ex \left[\left |H_{k, \sigma}\left(t\right)\right|^2\right] dt,
\end{aligned}
\end{equation}
by translation-invariance in law. Finally, it follows from Proposition \ref{MixedRandomMoments} that uniformly for $t \in [-e^{k}/(2\log x), e^{k}/(2\log x)]$ we have
$$ 
\ex \left[\left |H_{k, \sigma}\left(t\right)\right|^2\right] dt \ll \exp\left( \sum_{p\leq x^{e^{-k}}} \frac{|g(p)|^2}{p}\right) \ll (e^{-k}\log x)^{\alpha}.
$$
Combining this estimate with \eqref{HolderLargeT} and \eqref{SplitIntegralLargeT} completes the proof.

\end{proof}

\begin{proof}[Proof of Theorem \ref{Main}]
Note first that the statement is trivial for $0 \le q \le \frac{1}{\log \log x}$, since in that case, by bounding the $2q$-norm by the $2$-norm and using orthogonality and (\ref{TrivialBoundOnInt}), we have
$$\Psi_{2 q,g}(x) = \Big\Vert \sum_{n \le x} \frac{g(n) X(n)}{\sqrt{n}} \Big\Vert_{2 q}^{2q} \le \Big\Vert \sum_{n \le x} \frac{g(n) X(n)}{\sqrt{n}} \Big\Vert_2^{2q} = \left(\sum_{n\leq x} \frac{|g(n)|^2}{n} \right)^q \ll (\log x)^{B^2 q} \ll 1.
$$

So suppose now that $\frac{1}{\log \log x} \le q \le 1/2$.
The remaining argument, with uniformity in $q$, closely follows the lines of reasoning in \cite{AHZ}.

Let $\sigma= -2(k+1)/\log x$. First, it follows from Propositions \ref{REP} and \ref{REP2} that 
\begin{equation}\label{}
\begin{aligned}
\Psi_{2q,g}(x) 
&\ll \frac{1}{(\log x)^q}\sum_{0\leq k\leq K} \mathbb{E} \Bigg[ \bigg( \int_{\mathbb{R}} \frac{\left| G_{k+1} \left( \frac{1}{2} + \sigma + it \right) \right|^2}{\left| -\sigma + i t \right|^2} \, dt \bigg)^q \Bigg] \\ &+ \sum_{0 \le k \le K+1} e^{- e^k q} \mathbb{E} \left[ |G_k(1/2)|^{2 q} \right] + K \exp \left( -(1+o(1)) q \sqrt{\log x} \right) + 1 \\
& \ll (\log x)^{q^2\alpha} \log (1/q) + \frac{1}{(\log x)^q}\sum_{1\leq k\leq K+1} \mathbb{E} \Bigg[ \bigg( \int_{\mathbb{R}} \frac{\left| H_{k, \sigma} (t) \right|^2}{\left| \sigma + i t \right|^2} \, dt \bigg)^q  \Bigg],
\end{aligned}
\end{equation}
where the last estimate follows from \eqref{UpperH0}, the lower bound on $q$ and the fact that uniformly over $K \ge 1$ and $0 < q \le 1/2$ we have
\[ \sum_{0 \le k \le K+1} e^{-e^k q} \ll \min \{ K, \log(1/q) \}. \]

Let $T_0= -\sigma$, and for $j\geq 1$ we define $T_j= 2 T_{j-1}.$ Then, using that $\sum_{i=0}^{\infty}|y_i|\leq (\sum_{i=1}^{\infty} |y_i|^q)^{1/q}$ for all $\mathbf{y}=(y_1, y_2, \dots)\in \ell^q$ we obtain 
\begin{align*}
 \mathbb{E} \Bigg[ \bigg( \int_{\mathbb{R}} \frac{\left| H_{k, \sigma} (t) \right|^2}{\left| \sigma + i t \right|^2} \, dt \bigg)^q \Bigg]
&\leq 2\mathbb{E} \Bigg[ \bigg( \int_{0}^{T_0} \frac{\left| H_{k, \sigma} (t) \right|^2}{\left| \sigma + i t \right|^2} \, dt \bigg)^q \Bigg] + 2\sum_{j=0}^{\infty} \mathbb{E} \Bigg[ \bigg( \int_{T_j}^{2T_j} \frac{\left| H_{k, \sigma} (t) \right|^2}{\left| \sigma + i t \right|^2} \, dt \bigg)^q \Bigg]\\
& \ll |\sigma|^{-2q} \mathbb{E} \left[ \left( \int_{0}^{T_0} \left| H_{k, \sigma} (t) \right|^2 \, dt \right)^q \right] + \sum_{j=0}^{\infty} \frac{1}{T_j^{2q}} \ex \left[ \left(\int_{T_j}^{2T_{j}} |H_{k, \sigma}(t)|^2 dt \right)^q\right]\\
& \ll \sum_{j=0}^{\infty} \frac{1}{T_j^{2q}} \ex \left[ \left(\int_{T_j}^{2T_{j}} |H_{k, \sigma}(t)|^2 dt \right)^q\right],
\end{align*}
by Proposition \ref{SmallT} combined with the symmetry and translation invariance in law of $H_{k,\sigma}$.
Therefore, we deduce that 
\begin{align}
\Psi_{2q,g}(x) \ll (\log x)^{q^2\alpha} \log (1/q) + \frac{1}{(\log x)^q}\sum_{1 \le k \le K+1}\sum_{j=0}^{\infty} \frac{1}{T_j^{2q}} \ex \left[ \left(\int_{T_j}^{2T_{j}} |H_{k, \sigma}(t)|^2 dt \right)^q\right]. \label{PsiSumMoments}
\end{align}
By Proposition \ref{SmallT}, the contribution of the terms $T_j\leq e^{k}/\log x$ to the inner sum above is 
\begin{equation}\label{ContribSmallT}
\begin{aligned}
\sum_{T_j\leq e^k/\log x}
 \frac{1}{T_j^{2q}} \ex \left[ \left(\int_{T_j}^{2T_{j}} |H_{k, \sigma}(t)|^2 dt \right)^q\right] 
 & \leq  (\log x)^{q^2 \alpha} e^{-kq^2 \alpha} \sum_{ T_j\leq e^k/\log x}
  T_j^{-q} \\
 & \ll k e^{-kq^2 \alpha} (\log x)^{q^2 \alpha+q},
\end{aligned}
\end{equation}
where we used that the number of $T_j \le e^{k}/\log x$ is $\ll k$.
Now, it follows from Proposition \ref{MediumT} that the contribution of the terms $ e^{k}/\log x < T_j \leq 1$ to the inner sum in \eqref{PsiSumMoments} is 
\begin{equation}\label{ContribMediumT}
\begin{aligned}
\sum_{e^k/\log x < T_j \leq 1}
 \frac{1}{T_j^{2q}} \ex \left[ \left(\int_{T_j}^{2T_{j}} |H_{k, \sigma}(t)|^2 dt \right)^q\right] 
 & \leq  (\log x)^{q \alpha}e^{-kq \alpha} \sum_{ e^k/\log x < T_j \leq 1}T_j^{-q^2 \alpha-q(1-\alpha)}\\
 & \ll (\log x)^{q^2 \alpha+q} e^{-(q^2 \alpha+q) k} \sum_{r=0}^{\infty} 2^{-r \left( q^2 \alpha + q (1-\alpha) \right)} \\
 &\ll  \frac{1}{q} (\log x)^{q^2 \alpha+q} e^{-(q^2 \alpha+q) k},
\end{aligned}
\end{equation}
using in the last step that the inner sum is $\le 1/(1-2^{-q(1-\alpha)})\ll 1/q$ since $0 < \alpha < 1$ is fixed.

Finally, Proposition \ref{LargeT} implies that the contribution of the terms $T_j\geq 1$ to the inner sum in \eqref{PsiSumMoments} is 
\begin{equation}\label{ContribLargeT}
\begin{aligned}
\sum_{T_j>1}
 \frac{1}{T_j^{2q}} \ex \left[ \left(\int_{T_j}^{2T_{j}} |H_{k, \sigma}(t)|^2 dt \right)^q\right] 
 & \leq  (\log x)^{q \alpha}  e^{-kq \alpha}  \sum_{T_j > 1}
T_j^{-q} \\
 & \ll \frac{1}{q} (\log x)^{q \alpha} e^{-kq \alpha}.
\end{aligned}
\end{equation}
Inserting the estimates \eqref{ContribSmallT}, \eqref{ContribMediumT} and \eqref{ContribLargeT} in \eqref{PsiSumMoments} we deduce that 
\begin{align*}
\Psi_{2q,g}(x) 
&\ll (\log x)^{q^2\alpha} \log (1/q) + (\log x)^{q^2\alpha}\sum_{1 \le k \le K+1} k e^{-kq^2 \alpha} \\ &+ \frac{1}{q} (\log x)^{q^2\alpha}\sum_{1 \le k \le K+1} e^{-(q^2 \alpha+q) k} + \frac{1}{q} (\log x)^{q\alpha-q} \sum_{1 \le k \le K+1} e^{-kq\alpha}\\
& \ll (\log x)^{q^2\alpha} \left( \log(1/q) + \min \{ K^2,\frac{1}{q^4} \} + \min \{ \frac{K}{q},\frac{1}{q^2} \} + (\log x)^{-q (1 - \alpha) } \min \{ \frac{K}{q},\frac{1}{q^2} \} \right) \\
&\ll \frac{1}{q} (\log x)^{q^2 \alpha} \min \{ (\log \log \log x)^2,\frac{1}{q^3} \},
\end{align*}
which completes the proof.
\end{proof}


\end{document}